\documentclass[10pt]{amsart}
\usepackage{bm}

\usepackage{amsmath}
\usepackage{amsthm}
\usepackage{amssymb}

\usepackage{hyperref}

\newcommand{\R}{\mathbb{R}}
\newcommand{\lr}[1]{\langle #1 \rangle}
\newcommand{\Lr}[1]{\Big\langle #1 \Big\rangle}
\newcommand{\eps}{\varepsilon}
\newcommand{\wt}[1]{\widetilde{#1}}
\newcommand{\wh}[1]{\widehat{#1}}

\newcommand{\uh}{u^{\rm{hyp}}}
\newcommand{\uhp}{u^{\rm{hyp},+}}
\newcommand{\uhn}{u^{\rm{hyp},-}}
\newcommand{\whp}{w^{\rm{hyp},+}}

\newcommand{\ue}{u^{\rm{ell}}}
\newcommand{\uep}{u^{\rm{ell},+}}

\newtheorem{thm}{Theorem}[section]
\newtheorem{prop}[thm]{Proposition}

\newtheorem{lem}[thm]{Lemma}
\newtheorem{cor}[thm]{Corollary}

\theoremstyle{remark}

\DeclareMathOperator{\supp}{supp}
\allowdisplaybreaks[1]

\numberwithin{equation}{section}

\date{\today}

\title[Asymptotics of solutions to the short-pulse eq]{Large time asymptotics of solutions\\ to the short-pulse equation}

\author[M. Okamoto]{Mamoru Okamoto}
\address{Division of Mathematics and Physics, Faculty of Engineering, Shinshu University, 4-17-1 Wakasato, Nagano City 380-8553, Japan}
\email{m\_okamoto@shinshu-u.ac.jp}

\thanks{This work was supported by JSPS KAKENHI Grant number JP16K17624.}

\subjclass[2010]{35Q53, 35B40}

\keywords{short-pulse equation, modified scattering}

\date{\today}

\begin{document}

\begin{abstract}
We consider the long-time behavior of solutions to the short-pulse equation.
Using the method of testing by wave packets, we prove small data global existence and modified scattering.
\end{abstract}

\maketitle

\section{Introduction}

We consider the Cauchy problem for the short-pulse equation
\begin{equation} \label{sp}
\begin{aligned}
& u_{tx} = u + (u^3)_{xx}, \\
& u(0,x) = u_0(x) ,
\end{aligned}
\end{equation}
where $u = u(t,x) : \R_+ \times \R \rightarrow \R$ is an unknown function, and $u_0$ is a given function.
The short-pulse equation gives an approximate solution to Maxwell's equation describing the propagation of ultra-short optical pulses in nonlinear media (see \cite{SchWay04}).

We consider the previous results for the generalized Ostrovsky equation
\begin{equation} \label{Ostrovsky}
u_{tx} = (u^{p})_{xx}, \quad
u(0,x) = u_0(x)
\end{equation}
for $p \in \mathbb{N}_{\ge 2}$.
The case in which $p=2$ is known as the Ostrovsky-Hunter equation \cite{Boy05} or the short-wave equation \cite{Hun90}.
Pelinovsky and Sakovich \cite{PS10} showed global well-posedness in the energy space for $p=3$ and small initial data.
Stefanov et al. \cite{SSK10} showed local existence of a unique solution to \eqref{Ostrovsky} with $u_0 \in H^s(\R)$ when $s>\frac{3}{2}$.
They also proved global existence and scattering for $p\ge 4$ and small initial data $u_0 \in H^5(\R) \cap W^{3,1}(\R)$.
To confirm the global existence of a solution, we need to consider the smallness of the initial data.
Liu et al. \cite{LPS09, LPS10} demonstrated wave-breaking phenomena at $p=2,3$, and, in particular, the existence of a blowing-up solution.
Hayashi et al. \cite{HNN13} (see also \cite{HayNau14}) provided the $L^{\infty}$ decay estimates and the solution scatters to a free solution for $p \ge 4$ and small initial data $u_0 \in H^s(\R) \cap \dot{H}^{-1}(\R)$ with $s>2$ and $x \partial_x u_0 \in L^2(\R)$.
In \cite{HNN14}, they also proved the nonexistence of the usual scattering states for $p=3$.
Recently, Niizato \cite{Nii14} showed the existence of a modified scattering state of \eqref{sp} for small initial data in $u_0 \in H^s(\R) \cap \dot{H}^{-1}(\R)$ with $s>10$ and $x \partial_x u_0 \in H^5(\R)$.
Using the factorization technique, Hayashi and Naumkin \cite{HayNau15} proved the existence of a modified scattering state for \eqref{sp}, for a larger class of initial data than that in \cite{Nii14}.
In \cite{HayNau15}, they took the initial data that satisfy $u_0 \in H^s(\R) \cap \dot{H}^{-1}(\R)$ and $x \partial_x u_0 \in H^r (\R)$ with $s> \frac{5}{2} +r$ and $r>\frac{3}{2}$.
However, it appears that more regularity for the initial data is needed (see Appendix \ref{appendix}).

In this paper, we use the method of testing by wave packets based on the work of Ifrim and Tataru \cite{IfrTat15, IfrTat16} (see also \cite{HarG16, HIT}).
This method in some sense interpolates between the physical and the Fourier side analysis of an asymptotic equation.
Instead of localizing on either the physical or the Fourier side, we use a mixed wave packet style phase space localization.
We prove small data global existence and modified scattering in a large class of initial data.

Let $L$ denote the linear operator of \eqref{sp}:
\[
L := \partial_t - \partial_x^{-1}.
\]
We note that
\[
\partial_x^{-1}f (x) = \frac{1}{\sqrt{2\pi}} \int_{-\infty}^x f(y) dy
\]
holds provided that $f \in \dot{H}^{-1} (\R)$, where $\partial_x^{-1} := \mathcal{F}^{-1} \frac{1}{i\xi} \mathcal{F}$.
To obtain pointwise estimates for the solutions, we use the vector field
\[
J := x-t \partial_x^{-2} ,
\]
which satisfies $J = e^{t \partial_x^{-1}} x e^{-t \partial_x^{-1}}$.
This is a powerful tool for studying the large time existence of nonlinear evolution equations (see \cite{Kla85, HNN13, HNN14, Nii14, HayNau15} and references therein).
Factorizing the symbol $x+\frac{t}{\xi^2}$ of $J$, we define
\[
J_{\pm} := \sqrt{|x|} \mp i \sqrt{t} \partial_x^{-1} .
\]
Here $J_+$ is hyperbolic on positive frequencies and elliptic on negative frequencies.
These operators are useful in our analysis.

The equation \eqref{sp} is invariant under the scaling transformation
\begin{equation} \label{scaling}
u (t,x) \mapsto \lambda^{-1} u(\lambda^{-1} t, \lambda x)
\end{equation}
for any $\lambda >0$.
The generator of the scaling transformation is given by
\[
S := -t \partial_t + x \partial_x -1 ,
\]
which is related to $L$ and $J$ as follows:
\[
S = -t L + J \partial_x -1 .
\]

The free solution for \eqref{sp} is written as follows:
\[
e^{t \partial_x^{-1}} f (x) = (\mathcal{F}^{-1} [e^{\frac{t}{i\xi}}] \ast f ) (x) , \quad
\mathcal{F}^{-1} [e^{\frac{t}{i\xi}}] (x) = \frac{1}{\sqrt{2\pi}} \int_{\R} e^{i ( x \xi - \frac{t}{\xi})} d\xi .
\]
Because $\partial_{\xi} ( x \xi - \frac{t}{\xi}) = x + \frac{t}{\xi^2}$ becomes zero if and only if $\xi = \pm \sqrt{\frac{t}{|x|}}$ and $x<0$, the stationary phase method implies that  the free solution $e^{t \partial_x^{-1}} f (x)$ decays rapidly when $x>0$ and oscillates when $x<0$.
As the solution to \eqref{sp} with small initial data behaves like the free solution, this observation shows that modified scattering occurs when $x<0$.

To state our main result, we introduce the norm with respect to the spatial variable
\[
\| u (t) \|_{X^s} := \left( \| u (t) \|_{H^s}^2 + \| u (t) \|_{\dot{H}^{-1}}^2 + \| J \partial_x u (t) \|_{L^2}^2 \right)^{\frac{1}{2}}
\]
for $s \in \R$.

\begin{thm} \label{thm}
Let $s >4$.
Assume that the initial data $u_0$ at time $0$ satisfies
\[
\| u_0 \|_{X^s} \le \eps \ll 1.
\]
Then, there exists a unique global solution $u$ that satisfies the bound
\[
\| u(t) \|_{X^s} \lesssim \eps \lr{t}^{C \eps} ,
\]
as well as the pointwise bound
\begin{equation} \label{est:u_infty}
\| u(t) \|_{L^{\infty}} + \| u_x (t) \|_{L^{\infty}} \lesssim \eps \lr{t}^{-\frac{1}{2}}.
\end{equation}
Furthermore, there exists a unique modified final state $W \in L^{\infty}(\R_-)$ such that, for large $t \ge 1$,
\begin{align*}
u(t,x) = & \frac{2}{\sqrt{t}} \bm{1}_{\R_-} (x) \Re \left\{ W \left( \frac{x}{t} \right) \exp \left( -2i \sqrt{t|x|} + 3i \sqrt{\frac{t}{|x|}} \left| W \left( \frac{x}{t} \right) \right|^2 \log t \right) \right\} \\
& + O \left( \eps t^{-\frac{1}{2} -\kappa} \right)
\end{align*}
holds uniformly with respect to $x \in \R$, where $0<\kappa < \min \left\{ \frac{1}{4} - \frac{5}{8} \alpha^{\ast}, \frac{s-2}{2(s+1)} \right\}$ and $\alpha^{\ast} := \min \left\{ \frac{2}{45}, \frac{2}{2s+1}, \frac{2(s-4)}{3(s+1)} \right\}$.
\end{thm}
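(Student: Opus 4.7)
The plan is to follow the testing-by-wave-packets framework of Ifrim--Tataru, combining energy estimates for the $X^s$ norm with a wave-packet reconstruction on the hyperbolic region $x<0$ that simultaneously yields the pointwise decay and extracts the modified scattering profile.

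\textbf{Bootstrap and energy estimates.} Local well-posedness in $X^s$ for $s>3/2$ follows from the Stefanov et al.\ argument. To globalize, on $[0,T]$ I bootstrap
\[
\| u(t) \|_{X^s} \le C \eps \lr{t}^{C\eps}, \qquad \| u(t) \|_{L^\infty} + \| u_x(t) \|_{L^\infty} \le C \eps \lr{t}^{-1/2}.
\]
Applying $\partial_x^k$, $\partial_x^{-1}$, and $J\partial_x$ to \eqref{sp} and differentiating the three pieces of $\|u\|_{X^s}^2$ in time produces cubic expressions in $u$; after integration by parts these are bounded by $\eps^2 \lr{t}^{-1}\|u\|_{X^s}^2$ using the pointwise bootstrap, so Gr\"onwall gives $\|u\|_{X^s}\lesssim \eps \lr{t}^{C\eps^2}$ and closes the energy half. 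The delicate contribution is the commutator of $J\partial_x$ with the nonlinearity; I handle it by passing to the conjugated profile $w:=e^{-t\partial_x^{-1}}u$, for which $J$ becomes multiplication by $x$ and the commutator reduces to a multiplier acting on $u^2 u_x$.

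\textbf{Wave-packet reconstruction and asymptotic ODE.} For $v=x/t<0$ the free-propagator phase is stationary at $\xi_v:=-\sqrt{-1/v}$ with value $-2\sqrt{-v}\,t$. I introduce packets
\[
\Psi_v(t,x):= t^{-1/2}\chi\!\left(\frac{x-vt}{t^{1/2+\delta}}\right) e^{-2i\sqrt{t|x|}}
\]
with $\chi$ a smooth cutoff and $\delta$ small, and set $\gamma(t,v):=\lr{u(t),\Psi_v(t)}_{L^2}$. Stationary phase gives the pointwise reconstruction
\[
u(t,x) = \tfrac{2}{\sqrt{t}}\,\bm{1}_{\R_-}(x)\,\Re\!\bigl\{\gamma(t,x/t)\,e^{-2i\sqrt{t|x|}}\bigr\} + r(t,x),\qquad \|r(t)\|_{L^\infty}\lesssim \eps\,t^{-1/2-\kappa}.
\]
The admissible $\kappa$ is controlled by interpolating $\|u\|_{H^s}$ against $\|J\partial_x u\|_{L^2}$ on a $t^{1/2+\delta}$ localization scale, producing the three candidates $\tfrac{2}{45}$, $\tfrac{2}{2s+1}$, $\tfrac{2(s-4)}{3(s+1)}$ that define $\alpha^\ast$. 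Differentiating $\gamma$ in time, substituting \eqref{sp}, and cancelling the linear piece against $\partial_t\Psi_v$ leaves the cubic contribution; its stationary-phase diagonal at $\xi_v$ yields
\[
\partial_t \gamma(t,v) = \frac{3i}{\sqrt{-v}\,t}\,|\gamma(t,v)|^2\gamma(t,v) + R(t,v),\qquad |R(t,v)|\lesssim \eps^3\,t^{-1-\kappa}.
\]

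\textbf{Modified scattering and closing the bootstrap.} Because the leading right-hand side is purely imaginary, $|\gamma(t,v)|$ is Cauchy in $t$ uniformly in $v$, so $W(v):=\lim_{t\to\infty}\gamma(t,v)$ is a well-defined element of $L^\infty(\R_-)$. Integrating the phase equation gives
\[
\arg\gamma(t,v) = \arg\gamma(1,v) + \frac{3\,|W(v)|^2}{\sqrt{-v}}\log t + O(t^{-\kappa}),
\]
and substituting into the reconstruction formula together with $\sqrt{-1/v}=\sqrt{t/|x|}$ produces precisely the expansion stated in the theorem, and in particular improves $\|u(t)\|_{L^\infty}\lesssim \eps\lr{t}^{-1/2}$. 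The derivative bound $\|u_x(t)\|_{L^\infty}$ follows from the parallel analysis with $\partial_x\Psi_v$ in place of $\Psi_v$, using $s>4$ to absorb the extra derivative via Sobolev embedding. The main technical obstacle is obtaining the cubic remainder $R(t,v)$ at rate $t^{-1-\kappa}$ uniformly in $v$: the nonresonant trilinear interactions must be controlled by a delicate interpolation between $\|J\partial_x u\|_{L^2}$, $\|u\|_{H^s}$, and the pointwise decay, and it is precisely this three-way balance, together with the loss from $\|u\|_{X^s}\lesssim\eps\lr{t}^{C\eps}$, that forces the entries of $\alpha^\ast$ and hence the restriction $s>4$. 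The ellipticity of $J_+$ on positive frequencies is not an obstruction but rather the mechanism that forces $W$ to be supported on $\R_-$.
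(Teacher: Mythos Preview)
Your outline follows the same Ifrim--Tataru scheme as the paper, but there are two genuine gaps.

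First, your wave packet $\Psi_v(t,x)=t^{-1/2}\chi\bigl((x-vt)/t^{1/2+\delta}\bigr)e^{i\phi}$ has the wrong spatial scale. The dispersion relation here is $\omega(\xi)=-1/\xi$, so $\omega''(\xi_v)\sim\xi_v^{-3}=|v|^{3/2}$ and the natural packet width is $t^{1/2}|v|^{3/4}$, not a $v$-independent $t^{1/2+\delta}$. With your scale the cancellation that makes $L\Psi_v$ small of order $t^{-1}$ fails uniformly in $v\in\Omega_\alpha(t)$; moreover the $L^1$ normalization $\|\Psi_v\|_{L^1}\sim t^{\delta}$ is inconsistent with your reconstruction formula $u\approx 2t^{-1/2}\Re\{\gamma e^{i\phi}\}$, which needs $\|\Psi_v\|_{L^1}\sim t^{1/2}$. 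The paper uses $\Psi_v=|v|^{-3/4}\chi\bigl((x-vt)/(t^{1/2}|v|^{3/4})\bigr)e^{i\phi}$ precisely so that $L\Psi_v=t^{-1}\partial_x\wt\chi\, e^{i\phi}+O(t^{-5/4})$ uniformly on $\Omega_\alpha$.

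Second, and more seriously, your bootstrap does not close for $u_x$. The reconstruction gives $u_x(t,vt)\approx 2t^{-1/2}|v|^{-1/2}\Re\{i e^{i\phi}\gamma(t,v)\}$, so to recover $\|u_x\|_{L^\infty}\lesssim\eps t^{-1/2}$ with a constant \emph{independent of the bootstrap constant} $D$ you need $|\gamma(t,v)|\lesssim\eps|v|^{1/2}$ for $|v|\ll 1$, not merely $|\gamma|\lesssim\eps$. The observation ``the leading term is imaginary so $|\gamma|$ is Cauchy'' only propagates whatever bound you have at the initial time of the ODE, and for small $|v|$ that initial time is $t_0=|v|^{-1/\alpha}\gg 1$, not $t=1$. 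The paper obtains this weighted smallness by first proving a frequency-localized pointwise estimate (its Proposition~3.1, splitting $u=u^{\mathrm{hyp}}+u^{\mathrm{ell}}$ and using the factorization $J=J_-J_+$) which gives $|u^{\mathrm{hyp}}(t,vt)|\lesssim t^{-1/2}|v|^{s/4-1/2}\|u\|_{X^s}$; this feeds $|\gamma(t_0,v)|\lesssim\eps t_0^{D_*\eps}|v|^{s/4-1/2}$, and only then does integrating the ODE produce $|\gamma(t,v)|\lesssim\eps|v|^{1/2}$, crucially because $s>4$. Your ``parallel analysis with $\partial_x\Psi_v$'' does not supply this, and the same hyperbolic/elliptic decomposition is what handles the region $v\notin\Omega_\alpha(t)$, which your outline also leaves untreated. (Incidentally, $J_+$ is hyperbolic on positive frequencies, not elliptic; that is why $u^{\mathrm{hyp},+}$ carries the main contribution.)
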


We note that our initial data space has the norm
\[
\| u_0 \|_{X^s} \sim \| u_0 \|_{H^s} + \| u_0 \|_{\dot{H}^{-1}} + \| x \partial_x u_0 \|_{L^2} .
\]
Accordingly, modified scattering holds for a larger class of initial data than shown by previous results.
In particular, we does not need the regularity of $x \partial_x u_0$.

We do not focus here on the upper bound of $\kappa$.
The crucial point is that the decay of the remainder part is faster than $t^{-\frac{1}{2}}$, which is the decay rate of the free solution.
In fact, Stefanov et al. proved the dispersive estimate
\[
\| e^{t \partial_x^{-1}} u_0 \| _{L^p} \lesssim t^{-(\frac{1}{2}-\frac{1}{p})} \| u_0 \|_{\dot{W}^{\frac{3}{2}-\frac{3}{p}, p'}}
\]
for $2<p<\infty$ (Theorem 3 in \cite{SSK10}).
Setting $p=\infty$ formally, we expect the decay rate of the $L^{\infty}$ norm of the free solution to be $t^{-\frac{1}{2}}$.

Roughly speaking, we will show the bound
\[
\| u_x(t) \|_{L^{\infty}} \lesssim t^{-\frac{1}{2}} \| u(t) \|_{H^s}^{\frac{1}{2}} \| J \partial_x u(t) \|_{L^2}^{\frac{1}{2}}
\]
for $s>4$ (see Proposition \ref{prop:est|u|} below), which implies \eqref{est:u_infty}.
Here, the assumption $s>4$ is almost optimal from the viewpoint of the scaling invariance.
Indeed, the fraction
\[
\frac{t^{\frac{1}{2}} \| u_x (t) \|_{L^{\infty}}}{\| u(t) \|_{\dot{H}^4}^{\frac{1}{2}} \| J \partial_x u(t) \|_{L^2}^{\frac{1}{2}}}
\]
is invariant under the scaling transformation \eqref{scaling}.

The remainder of this paper is organized as follows.
In \S \ref{S:energy}, we show the energy estimates and the existence of the local in time solution to \eqref{sp}.
In \S \ref{S:KS_type}, we prove a priori estimates, which give the pointwise bounds.
In \S \ref{S:wave_packet}, we construct a wave packet and observe its properties.
In \S \ref{S:proof}, by combining the estimates proved in previous sections, we prove our main theorem.
In Appendix \ref{appendix}, we provide a remark on the paper by Hayashi and Naumkin \cite{HayNau15}.

Finally, in this section, we present the notations used throughout this paper.
We denote the space of all smooth and compactly supported functions on $\R$ by $C_0^{\infty} (\R)$.
We denote the space of all rapidly decaying functions on $\R$ by $\mathcal{S}(\R)$.
We define the Fourier transform of $f$ by $\mathcal{F}[f]$ or $\widehat{f}$.
We use the inhomogeneous Sobolev spaces $H^s(\R)$ with the norm $\| f \|_{H^s}:= \| \lr{\cdot}^s \widehat{f} \| _{L^2}$, where $\lr{\xi} := (1+ | \xi |^2)^{\frac{1}{2}}$.
We also use the homogeneous Sobolev norm $\| f \|_{\dot{H}^s} := \| |\cdot |^s \wh{f} \|_{L^2}$.

In estimates, we use $C$ to denote a positive constant that can change from line to line.
If $C$ is absolute, or depends only on parameters that are considered fixed, we often use $X \lesssim Y$ in place of $X \le CY$.
We then use $X \ll Y$ to denote $X \le C^{-1} Y$ and $X \sim Y$ to denote $C^{-1} Y \le X \le C Y$.
We write $X = Y + O(Z)$ when $|X-Y| \lesssim Z$.

Let $\delta>0$ be a small constant, which is needed only to demonstrate Proposition \ref{prop:gamma_decay}.
For concreteness, we take $\delta = \frac{1}{1000}$.
Let $\sigma \in C_0^{\infty}(\R )$ be an even function with $0 \le \sigma \le 1$ and $\sigma (\xi ) = \begin{cases} 1, & \text{if } |\xi | \le 1, \\ 0, & \text{if } |\xi| \ge 2^{\delta} .\end{cases}$
For any $R, \, R_1, R_2 >0$ with $R_1 < R_2$, we set 
\begin{align*}
\sigma _R (\xi) := \sigma \Big( \frac{\xi}{R} \Big) -\sigma \Big( \frac{2^{\delta} \xi}{R} \Big) , \quad \sigma _{\le R} (\xi) := \sigma \Big( \frac{\xi}{R} \Big) , \quad \sigma _{>R} (\xi) := 1- \sigma _{\le R} (\xi) ,\\
\sigma _{<R}(\xi) := \sigma_{\le R}(\xi) - \sigma _{R} (\xi) , \quad
\sigma_{R_1 \le \cdot \le R_2} (\xi) := \sigma _{\le R_2}(\xi) - \sigma_{<R_1}(\xi) .
\end{align*}
For any $N, \, N_1, N_2 \in 2^{\delta \mathbb{Z}}$ with $N_1 < N_2$, we define
\[
P_N f := \mathcal{F}^{-1} [\sigma_{N} \widehat{f}] , \quad
P_{N_1 \le \cdot \le N_2} := \mathcal{F}^{-1}[\sigma_{N_1 \le \cdot \le N_2} \wh{f}] .
\]
We denote the characteristic function of an interval $I$ by $\bm{1}_{I}$.
For $N \in 2^{\delta \mathbb{Z}}$, we define the Fourier multipliers with the symbols $\bm{1}_{\R_+}(\pm \xi)$ and $\sigma_N(\xi) \bm{1}_{\R_+} (\pm \xi )$ by $P^{\pm}$ and $P_N^{\pm}$, respectively.

\section{Energy estimates} \label{S:energy}

The results in this section were essentially proved in \cite{SSK10} (see also \cite{SchWay04, HNN13}).
For completeness, we give an outline of this proof.

First, we recall the energy estimate proved by Stefanov et al. \cite{SSK10}.

\begin{lem}[Lemma 1 in \cite{SSK10}] \label{SSK}
Let $u$ be a smooth solution of the equation
\[
u_{tx} = u + F(t,x) u_{xx} + G(t,x)
\]
for $t>0$, where $F$ and $G$ are smooth functions.
Then, for every $s>1$, we have
\begin{align*}
\partial_t \| u(t) \|_{\dot{H}^s}^2
\lesssim & \| \partial_x F(t, \cdot ) \|_{L^{\infty}} \| u(t) \|_{\dot{H^s}}^2 \\
&  + \| u(t) \|_{\dot{H}^s} ( \| G(t,\cdot ) \|_{\dot{H}^{s-1}} + \| \partial_x u (t) \|_{L^{\infty}} \| F(t,\cdot ) \|_{\dot{H}^s}).
\end{align*}
\end{lem}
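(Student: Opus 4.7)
The plan is to reduce to a standard energy estimate for the derivative $v := u_x$, which satisfies the first-order evolution $v_t = u + F v_x + G$; note that $\|u\|_{\dot H^s} = \|v\|_{\dot H^{s-1}}$. Applying $|D|^{s-1}$, pairing with $|D|^{s-1} v$ and integrating yields
\[
\tfrac{1}{2}\partial_t\|v\|_{\dot H^{s-1}}^2 = \langle |D|^{s-1}u,|D|^{s-1}v\rangle + \langle |D|^{s-1}(Fv_x),|D|^{s-1}v\rangle + \langle |D|^{s-1}G,|D|^{s-1}v\rangle,
\]
so the proof splits into three pieces. The first vanishes: since $v = u_x$ and $\partial_x$ commutes with $|D|^{s-1}$ and is anti-symmetric, $\langle |D|^{s-1}u, \partial_x|D|^{s-1}u\rangle = 0$. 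The third is immediately controlled by Cauchy--Schwarz by $\|G\|_{\dot H^{s-1}}\|u\|_{\dot H^s}$.

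The real work lies in the quasilinear middle term. I would decompose
\[
|D|^{s-1}(Fv_x) = F|D|^{s-1}v_x + [|D|^{s-1}, F]v_x.
\]
For the first contribution, rewrite $|D|^{s-1}v_x = \partial_x|D|^{s-1}v$ and integrate by parts in $x$ to obtain $-\tfrac12\langle F_x,(|D|^{s-1}v)^2\rangle$, bounded by $\tfrac12\|\partial_x F\|_{L^\infty}\|u\|_{\dot H^s}^2$. The delicate step is the commutator, where I need to produce $\|F\|_{\dot H^s}\|u_x\|_{L^\infty}$ rather than the weaker $\|F\|_{\dot H^{s-1}}\|u_{xx}\|_{L^\infty}$ that a naive Kato--Ponce commutator estimate on $[|D|^{s-1}, F]v_x$ would give. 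To trade the missing derivative onto $F$, I use the algebraic identity $\partial_x(Fv) = F_xv + Fv_x$, which rearranges to
\[
[|D|^{s-1}, F]v_x = [|D|^{s-1}\partial_x, F]v - |D|^{s-1}(F_xv).
\]
Since $|D|^{s-1}\partial_x$ has total order $s$ (equivalently $|D|^{s-1}\partial_x = iH|D|^s$ with $H$ the Hilbert transform), the Kato--Ponce commutator estimate for $[|D|^s,F]$ together with Calderon's first commutator estimate for $[H,F]$ bounds the first term in $L^2$ by $\|\partial_x F\|_{L^\infty}\|u\|_{\dot H^s} + \|F\|_{\dot H^s}\|u_x\|_{L^\infty}$. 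The correction $|D|^{s-1}(F_xv)$ is handled by the fractional Leibniz rule, which yields the same bound (using $\|F_x\|_{\dot H^{s-1}} = \|F\|_{\dot H^s}$).

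The main obstacle is precisely this commutator bookkeeping: a direct Kato--Ponce bound on $[|D|^{s-1}, F]v_x$ distributes derivatives unfavorably, pairing $\|F\|_{\dot H^{s-1}}$ with $\|v_x\|_{L^\infty}$, and one must use the identity above to migrate a derivative from $v_x$ to $F$ before invoking the commutator estimate at its sharp order-$s$ form. The hypothesis $s > 1$ is used to keep every intermediate Sobolev index non-negative so that Kato--Ponce applies cleanly. Collecting the three bounds gives the stated inequality.
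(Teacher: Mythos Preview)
The paper does not prove this lemma: it is quoted verbatim as Lemma~1 of \cite{SSK10} and used as a black box in the proof of Lemma~\ref{lem:energy}. So there is no in-paper argument to compare against.

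That said, your proof is correct and is essentially the standard route (and, as far as one can infer, the one taken in \cite{SSK10}): pass to $v=u_x$, pair the $\dot H^{s-1}$ equation with $|D|^{s-1}v$, kill the linear term by antisymmetry of $\partial_x$, and treat the quasilinear term by the usual ``commutator plus integration by parts'' splitting. The only nontrivial point is exactly the one you flag---the naive Kato--Ponce bound on $[|D|^{s-1},F]v_x$ would produce $\|F\|_{\dot H^{s-1}}\|u_{xx}\|_{L^\infty}$ rather than $\|F\|_{\dot H^s}\|u_x\|_{L^\infty}$---and your identity
\[
[|D|^{s-1},F]\,v_x = [|D|^{s-1}\partial_x,F]\,v - |D|^{s-1}(F_x v)
\]
is the correct way to shift the derivative before invoking Kato--Ponce at order $s$. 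The Hilbert-transform commutator $[H,F]$ that appears when writing $|D|^{s-1}\partial_x = iH|D|^s$ is indeed handled by Calder\'on's first commutator estimate after writing $|D|^s v = \partial_x(H|D|^{s-1}v)$, so that step is fine. The hypothesis $s>1$ is used exactly where you say, to keep the Leibniz and commutator indices positive.
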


A simple calculation yields the following equations:
\[
[L,J]=0, \quad
[L,S]=-L, \quad
[S,\partial_x]=-\partial_x, \quad
S(fg) = Sf \cdot g + fSg + fg .
\]

\begin{lem} \label{lem:energy}
Let $s>1$.
Let $u$ be a solution to \eqref{sp} in a time interval $[0,T]$ satisfying
\[
\| u_0 \|_{X^s} \le \eps \ll 1
\]
and assume that there exists a constant $D$ with $1<D \le \eps^{-1}$ such that
\[
\| u(t) \|_{L^{\infty}} + \| u_x (t) \|_{L^{\infty}} \le D \eps \lr{t}^{-\frac{1}{2}}.
\]
Then,
\[
\| u(t) \|_{X^s} \le 10 \eps \lr{t}^{D_{\ast} \eps} ,
\]
where $D_{\ast} \lesssim D$.
\end{lem}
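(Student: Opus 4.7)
I would derive a differential inequality for each of the three summands of $\|u\|_{X^s}^2 = \|u\|_{H^s}^2 + \|u\|_{\dot{H}^{-1}}^2 + \|J\partial_x u\|_{L^2}^2$ and close via Gronwall. Throughout I would use the reformulated equation $Lu = (u^3)_x$, the skew-adjointness of $\partial_x^{-1}$ (so that the linear part of $L$ drops out of any pairing $\lr{L\varphi,\varphi}$), and the commutations $[L,J]=0$ and $[L,\partial_x]=0$. For the $H^s$-piece, splitting $(u^3)_{xx}=3u^2u_{xx}+6uu_x^2$ and applying Lemma~\ref{SSK} with $F=3u^2$ and $G=6uu_x^2$ reduces the task to the three pointwise/product estimates $\|\partial_x F\|_{L^\infty}=6\|uu_x\|_{L^\infty}\lesssim D^2\eps^2\lr{t}^{-1}$ (from the hypothesis) and $\|F\|_{\dot{H}^s}+\|G\|_{\dot{H}^{s-1}}\lesssim D^2\eps^2\lr{t}^{-1}\|u\|_{\dot{H}^s}$ (via Moser-type product estimates valid for $s>1$), so that $\partial_t\|u\|_{\dot{H}^s}^2\lesssim D^2\eps^2\lr{t}^{-1}\|u\|_{\dot{H}^s}^2$. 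The $L^2$ component is outright conserved since $\partial_t\|u\|_{L^2}^2=2\lr{(u^3)_x,u}=0$. For the $\dot{H}^{-1}$-piece, I would set $v:=\partial_x^{-1}u$ so that $\partial_x^{-1}$ applied to the equation gives the clean identity $Lv=u^3$; then skew-adjointness and Cauchy--Schwarz yield $\partial_t\|v\|_{L^2}^2=2\lr{u^3,v}\lesssim D^2\eps^3\lr{t}^{-1}\|v\|_{L^2}$, using the pointwise decay of $u$ and the conservation of $\|u\|_{L^2}$.

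The main work is the $\|J\partial_x u\|_{L^2}$-piece. Setting $W:=J\partial_x u$, the commutations give $LW=J\partial_x(3u^2u_x)$. Expanding $J\partial_x=x\partial_x-t\partial_x^{-1}$, using $\partial_x^{-1}(u^2u_x)=\tfrac{1}{3}u^3$, and substituting the algebraic identities $xu_x=W+tv$ and $xu_{xx}=W_x-u_x+tu$ (the latter obtained by differentiating the former), I would rewrite
\[
3J\partial_x(u^2u_x) = 6uu_xW + 3u^2W_x + \bigl(-3u^2u_x + 6t\,uu_xv + 2t\,u^3\bigr).
\]
The principal pairing telescopes after a single integration by parts, $\lr{3u^2W_x+6uu_xW,W}=3\lr{uu_x,W^2}$, which is bounded by $\|uu_x\|_{L^\infty}\|W\|_{L^2}^2\lesssim D^2\eps^2\lr{t}^{-1}\|W\|_{L^2}^2$ and produces the desired $\lr{t}^{D_{\ast}\eps}$-type growth when fed into Gronwall. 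The first correction $-3\lr{u^2u_x,W}$ is similarly controlled by $D^2\eps^2\lr{t}^{-1}\|u\|_{L^2}\|W\|_{L^2}$.

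\emph{Main obstacle.} The two remaining correction terms $6t\lr{uu_xv,W}$ and $2t\lr{u^3,W}$ carry an explicit factor of $t$ coming from the nonlocal part of $J=x-t\partial_x^{-2}$; a naive estimate gives only the non-decaying bound $\lesssim D^2\eps^2(\|v\|_{L^2}+\eps)\|W\|_{L^2}$, which upon integration would produce linear-in-$t$ growth of $\|W\|_{L^2}$, incompatible with the target $\lr{t}^{D_{\ast}\eps}$ rate. The plan is to combine them via the algebraic identity $2uu_xv=(u^2v)_x-u^3$, yielding $3t\lr{(u^2v)_x,W}-t\lr{u^3,W}$; transferring the $\partial_x$ onto $W$ in the first term and expanding $W_x=Ju_{xx}+u_x$ unearths further cancellations with the $\|u\|_{L^4}^4$ structure and produces terms that couple directly to the $L^2$-conservation of $u$ and to the $\dot{H}^{-1}$-bound on $v$ derived in the previous step, where they can be absorbed as forcing of size $D^2\eps^3\lr{t}^{-1+D_{\ast}\eps}$. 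Summing the three differential inequalities and invoking Gronwall with initial size $\eps$ then yields $\|u(t)\|_{X^s}\le 10\eps\lr{t}^{D_{\ast}\eps}$ with $D_{\ast}\lesssim D$, as required.
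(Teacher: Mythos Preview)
Your treatment of the $H^s$ and $\dot{H}^{-1}$ pieces is correct and essentially matches the paper. The gap is in the $\|J\partial_x u\|_{L^2}$ estimate, precisely at what you flag as the ``main obstacle.'' You correctly isolate the dangerous terms $6t\lr{uu_xv,W}+2t\lr{u^3,W}$, but the proposed remedy---the substitution $2uu_xv=(u^2v)_x-u^3$ followed by integration by parts---does not deliver forcing of size $D^2\eps^3\lr{t}^{-1+D_\ast\eps}$. If you push the computation one step further, for instance expanding $-t\lr{u^3,W}=-t\lr{u^3,xu_x-tv}=\tfrac{t}{4}\|u\|_{L^4}^4+t^2\lr{u^3,v}$ and likewise $-3t^2\lr{(u^2v)_x,v}=3t^2\lr{u^3,v}$, you are left with $4t^2\lr{u^3,v}$, which is bounded only by $t^2\|u\|_{L^\infty}^2\|u\|_{L^2}\|v\|_{L^2}\lesssim D^2\eps^4\,t\lr{t}^{D_\ast\eps}$; the remaining term $-3t\lr{u^2v,xu_{xx}}$ generates, after integration by parts, contributions of the same or worse order. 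No reshuffling among the pieces of $W=J\partial_x u$ removes these growing terms: $J$ commutes with the \emph{linear} flow but is not a symmetry of the nonlinear equation, and the unbounded $t$-factors are exactly the price for that.

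The paper sidesteps the issue by passing to the scaling generator $S=-tL+J\partial_x-1$, which \emph{is} a symmetry of the full equation. Using $[L,S]=-L$, $[S,\partial_x]=-\partial_x$ and the Leibniz rule $S(fg)=Sf\cdot g+f\,Sg+fg$, one obtains the exact identity $LSu=(S-1)\partial_x(u^3)=\partial_x(S-2)(u^3)=3\partial_x(u^2\,Su)$, and hence
\[
\partial_t\|Su\|_{L^2}^2=6\int_{\R} uu_x\,(Su)^2\,dx\le 6\|u\|_{L^\infty}\|u_x\|_{L^\infty}\|Su\|_{L^2}^2,
\]
with no $t$-weighted remainder at all. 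Gronwall gives $\|Su(t)\|_{L^2}\lesssim\eps\lr{t}^{D_\ast\eps}$, and then $\|J\partial_x u\|_{L^2}$ is recovered from $J\partial_x=S+tL+1$ together with $t\|Lu\|_{L^2}=t\|\partial_x(u^3)\|_{L^2}\le 3t\|u\|_{L^\infty}\|u_x\|_{L^\infty}\|u\|_{L^2}\lesssim D^2\eps^3$. This switch from $J\partial_x$ to $S$ is the missing idea in your argument.
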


\begin{proof}
Integration by parts yields
\[
\partial_t \| u(t) \|_{L^2} = 2 \int_{\R} u (\partial_x^{-1} u + \partial_x(u^3)) dx =0 .
\]
Similarly, we have
\begin{align*}
\partial_t \| u(t) \|_{\dot{H}^1}^2
& = 6 \int_{\R} u_x( u^2 u_{xx} + 2uu_x^2) dx = 6 \int_{\R} u u_x^3 dx \\
& \le 6 \| u(t) \|_{L^2}^2 \| u(t) \|_{L^{\infty}} \| u_x(t) \|_{L^{\infty}} .
\end{align*}
From the integral equation for \eqref{sp}, we have
\begin{equation} \label{eq:uH-1}
\begin{aligned}
\| u(t) \|_{\dot{H}^{-1}}
& \le \| u_0 \|_{\dot{H}^{-1}} + \int_0^t \| u(t')^3 \| _{L^2} dt' \\
& \le \| u_0 \|_{\dot{H}^{-1}} + \int_0^t \| u(t') \|_{L^{\infty}}^2 \| u(t') \|_{L^2} dt' .
\end{aligned}
\end{equation}
For higher order derivatives, we apply Lemma \ref{SSK} with $F(t,x)=3u(t,x)^2$ and $G(t,x)=6u(t,x) u_x(t,x)^2$.
Note that
\begin{align*}
& \| \partial_x F(t, \cdot ) \|_{L^{\infty}}
\le 6 \| u(t) \|_{L^{\infty}} \| u_x(t) \|_{L^{\infty}} , \quad
\| F(t, \cdot ) \|_{\dot{H}^s}
\lesssim \| u(t) \|_{L^{\infty}} \| u(t) \|_{\dot{H}^s}, \\
& \| G(t, \cdot ) \|_{\dot{H}^{s-1}}
\lesssim \| u(t) \|_{L^{\infty}}^2 \| u (t) \|_{H^s} + \| u(t) \|_{L^{\infty}} \| u_x(t) \|_{L^{\infty}} \| u(t) \|_{H^{s-1}} .
\end{align*}
Thus, by combining the above estimates and Lemma \ref{SSK} with Gronwall's inequality, we obtain
\[
(\| u(t) \|_{H^s}^2 + \| u(t) \|_{\dot{H}^{-1}}^2)^{\frac{1}{2}} \le \eps \lr{t}^{D_{\ast}\eps} .
\]

From $S= -t L + J \partial_x -1$ and
\[
\| Lu(t) \|_{L^2} = \| \partial_x(u^3)(t) \|_{L^2}
\le 3 \| u(t) \|_{L^{\infty}} \| \partial_x u(t) \|_{L^{\infty}} \| u(t) \|_{L^2}
\le 3 D^2 \eps^3 \lr{t}^{-1} ,
\]
the estimate of $\|J \partial_x u \|_{L^2}$ is reduced that of $\| S u\| _{L^2}$.
By
\begin{equation} \label{eq:Su}
\begin{aligned}
\partial_t \| S u (t) \|_{L^2}^2
& = 2 \int_{\R} Su \cdot LSu dx
= 2 \int_{\R} Su \cdot (S-1) \partial_x(u^3) dx \\
& = 2 \int_{\R} Su \cdot \partial_x(S-2) (u^3) dx
= 6 \int_{\R} Su \cdot \partial_x (u^2 Su) dx \\
& = 6 \int_{\R} u \partial_xu (Su)^2 dx
\le \| u(t) \|_{L^{\infty}} \| \partial_x u(t) \|_{L^{\infty}} \| Su(t) \|_{L^2}^2 ,
\end{aligned}
\end{equation}
Gronwall's inequality yields
\[
\| Su(t) \|_{L^2} \le 2 \eps \lr{t}^{D_{\ast}\eps} ,
\]
which concludes the proof.
\end{proof}

\begin{cor} \label{cor:LWP}
Let $s>\frac{3}{2}$ and $u_0 \in X^s$.
Then, there exists an existence time $T= T(\| u_0 \|_{H^s})$ and a unique solution $u$ to \eqref{sp} satisfying $\sup_{0 \le t \le T} \| u(t) \|_{X^s} \le 10 \| u_0 \|_{X^s}$.
\end{cor}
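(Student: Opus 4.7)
The plan is to couple a frequency-truncated approximation scheme with the a priori estimates of Lemma~\ref{lem:energy}, and to deduce uniqueness from an $L^2$ energy estimate on the difference of two solutions. For each dyadic $N$, I would solve the regularized problem
\[
\partial_t u^N = \partial_x^{-1} u^N + P_{\le N}\partial_x\bigl((P_{\le N} u^N)^3\bigr), \qquad u^N(0) = P_{\le N} u_0,
\]
which is a locally Lipschitz ODE in the Hilbert space of functions with Fourier support in $\{|\xi| \le N\}$ equipped with the $L^2 \cap \dot{H}^{-1}$ norm; note that $\dot{H}^{-1}$ is preserved along the flow because the nonlinearity is a perfect $x$-derivative. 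Picard iteration thus produces a smooth solution on some interval $[0, T_N)$.

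The core step is a uniform-in-$N$ bound in $X^s$. Here I would revisit the proof of Lemma~\ref{lem:energy}, dropping the smallness hypothesis and replacing the a priori $L^\infty$ assumption by the Sobolev embedding
\[
\|u^N(t)\|_{L^\infty} + \|\partial_x u^N(t)\|_{L^\infty} \lesssim \|u^N(t)\|_{H^s},
\]
valid since $s > \tfrac{3}{2}$. Feeding this into the $\dot{H}^s$ estimate obtained from Lemma~\ref{SSK}, the $\dot{H}^{-1}$ estimate \eqref{eq:uH-1}, and the $Su$ estimate \eqref{eq:Su} (together with the reduction $\|J\partial_x u\|_{L^2} \le \|Su\|_{L^2} + t\|Lu\|_{L^2} + \|u\|_{L^2}$ coming from $S = -tL + J\partial_x - 1$) leads to a closed differential inequality of the form
\[
\frac{d}{dt}\|u^N(t)\|_{X^s}^2 \le C\bigl(1 + t + \|u^N(t)\|_{X^s}^2\bigr)^C \|u^N(t)\|_{X^s}^2.
\]
ODE comparison then produces a time $T = T(\|u_0\|_{X^s}) > 0$, independent of $N$, on which $\|u^N(t)\|_{X^s} \le 10\|u_0\|_{X^s}$.

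Passage to the limit and uniqueness complete the argument. The uniform bound together with the equation gives equicontinuity of $\{u^N\}$ into $H^{s-3}$, so by Aubin--Lions a subsequence converges in $C([0,T]; H^{s-1})$; this is sufficient to pass to the limit in $(u^N)^3$, lower semicontinuity preserves the $X^s$ bound, and a standard Bona--Smith argument upgrades weak-$X^s$ convergence to strong continuity. For uniqueness, the difference $w = u_1 - u_2$ of two solutions sharing the same initial data satisfies
\[
w_t = \partial_x^{-1} w + \partial_x\bigl((u_1^2 + u_1 u_2 + u_2^2)w\bigr);
\]
the antisymmetry of $\partial_x^{-1}$ kills the linear contribution to $\tfrac{d}{dt}\|w(t)\|_{L^2}^2$, while integration by parts in the nonlinear term leaves a coefficient $\partial_x(u_1^2 + u_1 u_2 + u_2^2)$ bounded in $L^\infty$ via Sobolev embedding, so Gronwall forces $w \equiv 0$. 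The main technical obstacle is ensuring that the nonlocal norms $\|u\|_{\dot{H}^{-1}}$ and $\|J\partial_x u\|_{L^2}$ behave stably under the truncation and in the limit; this rests on the fact that $P_{\le N}$ commutes with $\partial_x^{-1}$ and that the identity $S = -tL + J\partial_x - 1$ is preserved at every stage of the approximation.
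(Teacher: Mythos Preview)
Your Galerkin/frequency-truncation scheme is a valid alternative to the paper's approach, which instead iterates the quasilinear linearization $u^{(n)}_{tx} = u^{(n)} + 3\partial_x\{(u^{(n-1)})^2 \partial_x u^{(n)}\}$ and passes to the limit. Two differences are worth flagging. First, the paper closes the $H^s$ estimate \emph{by itself}: Lemma~\ref{SSK} together with Sobolev embedding gives $\partial_t\|u\|_{H^s}^2 \lesssim \|u\|_{H^s}^4$, decoupled from the other pieces of $X^s$, and this is precisely what yields $T = T(\|u_0\|_{H^s})$ rather than $T(\|u_0\|_{X^s})$; the bounds \eqref{eq:uH-1} and \eqref{eq:Su} are then appended afterward, since they are linear (or Gronwall-able) in $\|u\|_{\dot{H}^{-1}}$ and $\|Su\|_{L^2}$ once $\|u\|_{H^s}$ is controlled. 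Your bundled differential inequality for $\|u^N\|_{X^s}^2$ only gives $T(\|u_0\|_{X^s})$ as written; to match the statement you must separate the estimates in the same way. Second, your $L^2$ uniqueness argument is in fact cleaner than the paper's, which works in $H^s$ and invokes $\|u\|_{H^{s+1}}$ on the right-hand side, so that uniqueness is asserted only ``as a limit of classical solutions.''

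One technical slip: $\partial_x^{-1}$ is unbounded on the space you name, because $P_{\le N}$ does not remove the singularity of $1/(i\xi)$ at $\xi = 0$, and $u\in\dot{H}^{-1}$ does not imply $\partial_x^{-1}u\in\dot{H}^{-1}$. Thus your regularized problem is not a Lipschitz ODE on $L^2\cap\dot{H}^{-1}$ as claimed. The standard remedy is to truncate low frequencies as well (replace $P_{\le N}$ by $P_{N^{-1}\le\cdot\le N}$), after which the flow is a genuine Lipschitz ODE on band-limited $L^2$ and the $\dot{H}^{-1}$ bound follows a posteriori from antisymmetry. You should also expect harmless commutator terms of the form $[x\partial_x, P_{\le N}]$ when running \eqref{eq:Su} on the truncated equation; these are uniformly bounded Fourier multipliers supported near $|\xi|\sim N$ and do not spoil the estimate.
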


\begin{proof}
Set $u^{(0)} := u_0$ and for $n \in \mathbb{N}$, define
\[
\partial_{tx} u^{(n)} = u^{(n)} + 3 \partial_x \{ (u^{(n-1)})^2 \partial_x u^{(n)}\},
\quad u^{(n)} (0,x) = u_0 (x).
\]
In the same way as the proof of Lemma \ref{lem:energy}, we have
\[
\sup _{0 \le t \le T} \| u^{(n)} (t) \|_{H^s} \le \| u_0 \|_{H^s} \exp \left( C_1 T \sup _{0 \le t \le T} \| u^{(n-1)} (t) \|_{H^s}^2 \right)
\]
because $\| u(t) \|_{L^{\infty}} + \| u_x(t) \| _{L^{\infty}} \lesssim \| u(t) \|_{H^s}$.
Accordingly, by setting
\[
T := \frac{\log 2}{10 C_1 \| u_0 \|_{H^s}^2},
\]
we confirm that $\sup _{0 \le t \le T} \| u^{(n-1)} (t) \|_{H^s} \le 2 \| u_0 \|_{H^s}$ implies $\sup _{0 \le t \le T} \| u^{(n)} (t) \|_{H^s} \le 2 \| u_0 \|_{H^s}$.
Because $\sup _{0 \le t \le T} \| u^{(0)} (t) \|_{H^s} \le 2 \| u_0 \|_{H^s}$ holds, we have the bounded sequence $\{ u^{(n)} \}$ in $L^{\infty} ([0,T]; H^s(\R))$.
By the standard argument, we obtain a solution $u$ as the limit of the (sub)sequence.
Then, \eqref{eq:uH-1} and \eqref{eq:Su} yield
\begin{align*}
&
\begin{aligned}
\sup_{0 \le t \le T} \| u(t) \|_{\dot{H}^{-1}}
& \le \| u_0 \|_{\dot{H}^{-1}} + C_1 T \sup _{0 \le t \le T} \| u (t) \|_{H^s}^3 \\
& \le \| u_0 \|_{\dot{H}^{-1}} + \| u_0 \|_{H^s}
\le 2 \| u_0 \|_{X^s},
\end{aligned}
\\
&
\begin{aligned}
\sup _{0 \le t \le T} \| Su (t) \|_{L^2}
& \le \| (Su) (0) \|_{L^2} \exp \left( C_1 T \sup _{0 \le t T} \| u(t) \|_{H^s}^2 \right) \\
& \le 2 \| (Su) (0) \|_{L^2}
\le 4 \| u_0 \|_{X^s}.
\end{aligned}
\end{align*}
From $S= -t L + J \partial_x -1$, the solution $u$ is in $L^{\infty}([0,T]; X^s)$ and satisfies
\[
\sup_{0 \le t \le T} \| u(t) \|_{X^s} \le 10 \| u_0 \|_{X^s}.
\]

To show uniqueness, we take two solutions $u, u'$ to \eqref{sp}.
The calculation used in the proof of Lemma \ref{lem:energy} yields
\begin{align*}
& \sup _{0 \le t \le T} \| u(t) - u'(t) \|_{H^{s}} \\
& \lesssim \| u(0) - u'(0)\|_{H^s} \exp \left( C_2 T \sup_{0 \le t \le T} (\| u (t) \|_{H^{s+1}} + \| u' (t) \|_{H^{s+1}})^2 \right) .
\end{align*}
Hence, the solution is unique as a limit of classical solutions.
\end{proof}

\section{Pointwise decay estimates} \label{S:KS_type}

We decompose $u$ into positive and negative frequencies:
\[
u = u^+ + u^-, \quad u^{\pm} := P^{\pm} u.
\]
Because $u$ is real valued, $u^+ = \overline{u^-}$ and $u= 2 \Re u^+$.
Moreover,
\[
\| u^+ (t) \|_{X^s} = \| u^- (t) \|_{X^s} = \frac{1}{\sqrt{2}} \| u (t) \|_{X^s}.
\]
We write $u_N := P_N u$ and $u_N^+ := P_N^+ u$.
From $J \partial_x u_N = P_N (J \partial_x u) - \mathcal{F}^{-1} [ (\sigma_N)' \xi \wh{u}]$, we have
\[
\| u (t) \| _{X^s} \sim \left( \sum_{N \in 2^{\delta \mathbb{Z}}} \|u_N (t) \| _{X^s}^2 \right)^{\frac{1}{2}}.
\]

For $t \ge 1$, we further decompose $u^+$ into its hyperbolic and elliptic parts
\[
\uhp = \sum _{\substack{N \in 2^{\delta \mathbb{Z}} \\ N \le t}} \uhp_{N}, \quad
\uep = u^+ - \uhp ,
\]
where, for $N \le t$, we define
\[
\uhp_{N} := \sigma_N^{\rm{hyp}} u^+_{N}, \quad
\uep_{N} := u^+_{N} - \uhp_{N}.
\]
Here, $\sigma_N^{\rm{hyp}} (t,x) := \sigma_{\frac{1}{3} \frac{t}{N^2} \le \cdot \le 3 \frac{t}{N^2}} (x) \bm{1}_{\R_-} (x)$.

We note that $\uhp$ is supported in $\{ \frac{-x}{t} \ge \frac{1}{3 \cdot 2^{\delta}} t^{-2} \}$.
For $(t,x) \in \R^2$ with $-xt \ge \frac{1}{3 \cdot 2^{\delta}}$, the number of scaled dyadic numbers $2^{\delta \mathbb{Z}}$ satisfying $\frac{1}{3 \cdot 2^{\delta}} \frac{t}{N^2} \le |x| \le 3 \cdot 2^{\delta} \frac{t}{N^2}$ is  less than $\frac{5}{\delta}$.
Hence, $\uhp (t,x)$ is a finite sum of $\uhp_N(t,x)$.

The functions $\uhp_N$ and $\uep_N$ are frequency localized near $N$ in the following sense.

\begin{lem} \label{lem:freq_spat_loc}
For $2 \le p \le \infty$, any $a ,\, b ,\, c \in \R$ with $a \ge 0$ and $a+c \ge 0$, and any $R>0$, we have
\[
\| (1-P^+_{\frac{N}{2^{\delta}} \le \cdot \le 2^{\delta} N}) | \partial_x| ^{a} ( |x|^{b} \sigma_R P_N^+ f) \|_{L^p}
\lesssim_{a,b,c} N^{-c+\frac{1}{2}-\frac{1}{p}} R^{-a+b-c}  \| P_N^+ f \|_{L^2} . 
\]
Moreover, we may replace $\sigma_R$ on the left hand side by $\sigma_{>R}$ if $a+c>b+1$ and $\sigma_{<R}$ if $a+c\ge 0$ and $b=0$.
\end{lem}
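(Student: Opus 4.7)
The lemma is a quantitative frequency-shell localization statement: since $\widehat{P_N^+ f}$ is supported in the narrow shell $[N/2^\delta, 2^\delta N]$, and multiplication by the spatial weight $|x|^b \sigma_R$ corresponds on the Fourier side to convolution with a function of scale $1/R$ having Schwartz-type decay, the product $|x|^b \sigma_R P_N^+ f$ remains essentially at frequency $N$ up to relative errors of size $(NR)^{-M}$ for any $M$. The outer cutoff $1 - P^+_{[N/2^\delta, 2^\delta N]}$ projects off the bulk and isolates precisely these error tails, which is the source of the gain $(NR)^{-c}$.

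My plan is to work directly on the Fourier side. Setting $g := P_N^+ f$ and $\phi := |x|^b \sigma_R$, one has
\[
\mathcal{F}\bigl[|\partial_x|^a(\phi g)\bigr](\xi) = |\xi|^a\bigl(\widehat{\phi} \ast \widehat{g}\bigr)(\xi),
\]
and by rescaling $y = x/R$,
\[
|\widehat{\phi}(\zeta)| \lesssim_M R^{b+1}(1+R|\zeta|)^{-M}, \qquad M \ge 0,
\]
interpreting $|y|^b \sigma$ as a compactly supported tempered distribution when $b$ is nonpositive. For $\xi$ in the support of the outer cutoff and $\eta \in \supp(\widehat{g}) \subset [N/2^\delta, 2^\delta N]$, a short case check ($\xi < 0$, $0 < \xi < N/2^{2\delta}$, $\xi > 2^{2\delta} N$, and the narrow smooth transition bands) yields $|\xi - \eta| \gtrsim \max(|\xi|, N)$, whence
\[
|\xi|^a |\widehat{\phi}(\xi-\eta)| \lesssim R^{b+1} |\xi-\eta|^a (1+R|\xi-\eta|)^{-M}.
\]
Young's inequality in $\eta$ reduces matters to the restricted kernel norm $\int_{|\zeta| \gtrsim N} R^{b+1} |\zeta|^a (1 + R|\zeta|)^{-M}\, d\zeta$; choosing $M$ large relative to $a+c$ produces the factor $R^{-a+b}(RN)^{-c}$. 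Combined with $\|\widehat{g}\|_{L^1} \lesssim N^{1/2} \|g\|_{L^2}$ (Cauchy--Schwarz on the $O(N)$-sized support) and $\|\widehat{g}\|_{L^2} = \|g\|_{L^2}$, this gives the $L^\infty$ bound via inverse Fourier and the $L^2$ bound via Plancherel; interpolation produces the intermediate $L^p$ bound with the claimed factor $N^{-c+1/2-1/p} R^{-a+b-c}$.

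For the refined statement, I plan to dyadically decompose $\sigma_{>R} = \sum_{R' \ge R,\, R' \in 2^{\delta\mathbb{Z}}} \sigma_{R'}$ and sum the base estimate; the geometric series $\sum_{R'} R'^{-a+b-c}$ sums to $O(R^{-a+b-c})$ under the strict inequality $a+c > b+1$, the extra $+1$ absorbing a logarithmic endpoint loss from the edge of the Fourier support of each $\widehat{\sigma_{R'}}$. The case $\sigma_{<R}$ with $b=0$ is analogous, summed over $R' \le R$ and using that the base estimate is trivial when $R' N \lesssim 1$. The main obstacle I anticipate is the clean treatment of the narrow transition bands of the frequency cutoffs, where the pointwise bound $|\xi-\eta| \gtrsim N$ nearly fails; I expect to handle this using the smoothness of the cutoffs (inherited from $\sigma \in C_0^\infty$) and the freedom to take $M$ arbitrarily large.
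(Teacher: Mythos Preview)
Your main argument is essentially the paper's: work on the Fourier side, use that on the support of the outer projection one has $|\xi-\eta|\gtrsim N$ and $|\xi|\lesssim|\xi-\eta|$, bound the resulting convolution kernel, and apply Young's inequality. Your direct derivation of both the $L^2$ and $L^\infty$ bounds (via $\|\widehat g\|_{L^2}$ and $\|\widehat g\|_{L^1}\lesssim N^{1/2}\|g\|_{L^2}$) followed by interpolation is a clean variant of the paper's reduction to $p=2$ via Gagliardo--Nirenberg. The treatment of $\sigma_{>R}$ by dyadic summation is also the paper's. One small correction: your speculation about the extra ``$+1$'' in the condition $a+c>b+1$ is off---the geometric series $\sum_{k\ge 1}(2^{k\delta}R)^{-a+b-c}$ already converges under $a+c>b$, with no logarithmic endpoint loss; the stated hypothesis is simply stronger than necessary.

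The genuine gap is in your handling of $\sigma_{<R}$. If you decompose $\sigma_{<R}=\sum_{R'<R}\sigma_{R'}$ and sum the base estimate $N^{-c+\frac12-\frac1p}R'^{-a-c}$, the sum $\sum_{R'<R}R'^{-a-c}$ diverges at the small-$R'$ end for every $a+c\ge 0$, which is exactly the hypothesis range. Your escape clause ``the base estimate is trivial when $R'N\lesssim 1$'' does not rescue this: in that regime the spatial cutoff is narrower than the wavelength $1/N$, so $\widehat{\sigma_{R'}g}$ is spread over a band of width $\gtrsim 1/R'\gg N$, the outer projection removes only an $O(N)$-sized slice of that, and $|\partial_x|^a$ still sees arbitrarily high frequencies. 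No summable replacement bound is evident.

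The paper avoids this entirely. For $b=0$ the function $\sigma_{<R}(x)=\sigma(2^\delta x/R)$ is a rescaled copy of a \emph{fixed} Schwartz function, so one runs the same Young-inequality argument directly with $\sigma_{<R}$ in place of $\sigma_R$; the only point to check is that $\mathcal F\bigl[|\partial_x|^{a+c}\sigma_{<1}\bigr]\in L^1$, which holds for $a+c\ge 0$ since $\widehat{\sigma_{<1}}$ is Schwartz and $|\xi|^{a+c}$ is locally integrable. No dyadic summation is needed.
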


\begin{proof}
It is sufficient to show the case $p=2$, because the general case follows from the Gagliardo-Nirenberg inequality $\| f \|_{L^{\infty}} \lesssim \| f \|_{L^2}^{\frac{1}{2}} \| \partial_x f \|_{L^2}^{\frac{1}{2}}$ and the interpolation $L^p(\R) = (L^2(\R), L^{\infty} (\R))_{[1-\frac{2}{p}]}$.
We write
\begin{align*}
& \mathcal{F} [|x|^{b} \sigma_R P_N^+ f](\xi)
= \int_{\R} |\xi-\eta|^{-(a+c)} \mathcal{F}[ |\partial_x|^{a+c} ( |x|^{b} \sigma_R)] (\xi -\eta ) \wh{P_N^+ f}(\eta) d\eta ,
\\
& |x|^{b} \sigma_R (x) = R^{b} (| \cdot |^{b} \sigma_1) \Big( \frac{x}{R} \Big) .
\end{align*}
Because $|\xi| \le |\eta| + |\xi - \eta|$ and $|\xi -\eta| \ge 2^{-2\delta} ( 2^{\delta} -1) N$ if $\xi \notin [\frac{N}{2^{2\delta}}, 2^{2\delta}N]$ and $\eta \in \supp \wh{P_N^+ f}$,
Young's inequality yields
\begin{align*}
& \| (1-P^+_{\frac{N}{2^{\delta}} \le \cdot \le 2^{\delta}N}) | \partial_x | ^{a} ( |x|^{b} \sigma_R P_N^+ f) \|_{L^2} \\
& \lesssim N^{-c} R^{-a+b-c} \| \mathcal{F}[ | \partial_x|^{a+c} (|x|^{b} \sigma_1)] \|_{L^1} \| P_N^+ f \|_{L^2} \\
& \lesssim N^{-c} R^{-a+b-c} \| P_N^+ f \|_{L^2} .
\end{align*}
The same calculation is valid when we replace $\sigma_R$ with $\sigma_{<R}$ if $a+c \ge 0$ and $b=0$.
From $\sigma_{>R} = \sum_{k=1}^{\infty} \sigma_{2^{k\delta}R}$, we can replace $\sigma_R$ on the left hand side by $\sigma_{>R}$ because the summation with respect to $k$ converges if $a+c>b+1$.
\end{proof}

The next proposition plays crucial role in our analysis.

\begin{prop} \label{prop:est|u|}
For $s > \frac{3}{2}$ and $0<t<1$, we have
\[
|u(t,x)| , \, |u_x(t,x)| \lesssim \| u (t) \|_{X^s}.
\]
For $s> \frac{5}{2}$ and $t \ge 1$, we have
\begin{align*}
& | \uhp (t,x) | \lesssim t^{-\frac{1}{2}} \min \left\{ \left( \frac{|x|}{t} \right)^{\frac{s}{4}-\frac{1}{2}}, \left( \frac{|x|}{t} \right)^{-\frac{3}{4}} \right\} \| u (t) \|_{X^s}, \\
& | \uhp_x (t,x) | \lesssim t^{-\frac{1}{2}} \min \left\{ \left( \frac{|x|}{t} \right) ^{\frac{s}{4}-1}, \left( \frac{|x|}{t} \right) ^{-\frac{5}{4}} \right\} \| u (t) \|_{X^s},
\end{align*}
and
\begin{align*}
& | \ue (t,x) | \lesssim t^{-\frac{2s-1}{2s+2}} \left( 1+ \log t \right) \| u (t) \|_{X^s}, \\\
& | \ue_x (t,x) | \lesssim t^{-\frac{2s-3}{2s+2}} \left( 1+ \log t \right) \| u (t) \|_{X^s}.
\end{align*}
\end{prop}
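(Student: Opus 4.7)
For $0 < t < 1$ the bound is a direct consequence of the Sobolev embedding $H^s(\R) \hookrightarrow W^{1,\infty}(\R)$ for $s > 3/2$. For $t \geq 1$, the plan is to perform a Littlewood--Paley decomposition $u^+ = \sum_N u_N^+$ and within each block use the hyperbolic/elliptic splitting $u_N^+ = \uhp_N + \ue_N$. Because both pieces are essentially frequency-localized to $\{|\xi| \sim N\}$ by Lemma~\ref{lem:freq_spat_loc}, Bernstein's inequality reduces each $L^\infty$ bound to an $L^2$ bound at frequency $N$, with one extra factor of $N$ when an $x$-derivative is present.

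For the hyperbolic piece I plan to derive two complementary $L^2$ bounds and take their minimum. One comes directly from the Sobolev part, $\|u_N^+\|_{L^2} \lesssim N^{-s}\|u\|_{H^s}$, and dominates for large $N$. The other, sharper for small $N$, exploits the $J$-operator: the symbol $\sqrt{|x|} - \sqrt{t}/\xi$ of $J_+ = \sqrt{|x|} - i\sqrt{t}\partial_x^{-1}$ vanishes precisely at the stationary phase frequency $\xi_\ast = \sqrt{t/|x|}$ that is the center of the hyperbolic band singled out by $\sigma_N^{\rm hyp}$, so $J_+$ effectively annihilates wave packets riding the stationary phase. Using $\sqrt{|x|} u^+ = J_+ u^+ + i\sqrt{t}\partial_x^{-1} u^+$, the spatial-support relation $\sqrt{|x|} \sim \sqrt{t}/N$ on $\uhp_N$, the Fourier identity $\|\partial_x^{-1} u_N^+\|_{L^2} \sim N^{-1}\|u_N^+\|_{L^2}$, and a suitable bound on $\|J_+ u^+\|_{L^2}$ in terms of $\|J\partial_x u\|_{L^2} \lesssim \|u\|_{X^s}$, the second $L^2$ bound should acquire a gain of $N/\sqrt{t}$. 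After Bernstein and substitution $N \sim \sqrt{t/|x|}$, the two bounds recover the pair of exponents in the claim's minimum; a weighted Gagliardo--Nirenberg refinement may be needed to sharpen the first to the stated exponent $s/4 - 1/2$. The $\uhp_x$ bounds follow with one extra factor of $N$.

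For the elliptic piece, on the support of $\ue_N$ the symbol $x + t/\xi^2$ of $J = x - t\partial_x^{-2}$ is bounded below by $\lambda \sim |x| + t/N^2$, so $J$ is invertible there: $\|\ue_N\|_{L^2} \lesssim \lambda^{-1}\|(Ju)_N\|_{L^2}$. Combined with $\|(Ju)_N\|_{L^2} \lesssim N^{-1}\|u\|_{X^s}$ (which follows from the identity $\partial_x(Ju) = u + J\partial_x u$ at frequency $N$) and the Sobolev bound, and balancing the resulting $L^\infty$ bounds at the critical frequency $N_\ast = t^{1/(s+1)}$, one obtains the exponent $-(2s-1)/(2s+2)$; the factor $1 + \log t$ arises from summing over the $O(\log t)$ dyadic scales near the borderline where the two bounds intersect. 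The $\ue_x$ estimate follows with one additional factor of $N_\ast$.

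The main anticipated obstacle is making the hyperbolic $L^2$ bound quantitative with the correct $N/\sqrt{t}$ gain, which requires careful commutator analysis between the Littlewood--Paley projector $P_N^+$, the spatial cutoff $\sigma_N^{\rm hyp}$ (and the weight $\sqrt{|x|}$), and the nonlocal operator $\partial_x^{-1}$, and checking that all error tails sum over $N \lesssim t$. Lemma~\ref{lem:freq_spat_loc} is the principal workhorse for this bookkeeping.
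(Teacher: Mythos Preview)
Your elliptic outline is essentially what the paper does: the ellipticity of $J$ away from the stationary-phase band gives an $L^2$ gain on $\ue_N$ (the paper implements this via the identity $\|\tfrac{x}{t}f_{xx}\|_{L^2}^2+\|f\|_{L^2}^2=t^{-2}\|J\partial_x^2 f\|_{L^2}^2-2\int\tfrac{x}{t}|f_x|^2$ and a region-by-region analysis), and the $1+\log t$ loss appears when summing over $N\le t$.

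The hyperbolic part, however, has a real gap. Your second $L^2$ bound cannot come from the identity $\sqrt{|x|}\,u^+=J_+u^++i\sqrt{t}\,\partial_x^{-1}u^+$: on the support of $\uhp_N$ one has $\sqrt{|x|}\sim\sqrt{t}/N$ and $\|\partial_x^{-1}u_N^+\|_{L^2}\sim N^{-1}\|u_N^+\|_{L^2}$, so the left side and the second term on the right are both $\sim(\sqrt{t}/N)\|\uhp_N\|_{L^2}$. The inequality collapses to $A\lesssim B+A$ and yields nothing. More fundamentally, there \emph{is no} $L^2$ decay for $\uhp_N$: for a free solution at frequency $N$ the $L^2$ norm is conserved and, once $t\gg N$, almost all of it sits in the hyperbolic region. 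Consequently Bernstein alone gives $|\uhp_N|\lesssim N^{1/2}\|u_N\|_{L^2}\lesssim N^{3/2}\|u\|_{\dot H^{-1}}$, which after $N\sim(t/|x|)^{1/2}$ is $t^{3/4}|x|^{-3/4}$---off from the target $t^{1/4}|x|^{-3/4}$ by exactly the missing $t^{-1/2}$.

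The mechanism you flag as a ``refinement'' is in fact the core of the argument. One removes the phase and applies Gagliardo--Nirenberg to $e^{-i\phi}\uhp_N$, whose derivative is $|x|^{-1/2}J_+\partial_x\uhp_N$; this converts $t^{-1/2}$ dispersive decay into the smallness of $\|J_+\partial_x\uhp_N\|_{L^2}$. The needed bound,
\[
\|J_+\partial_x\uhp_N(t)\|_{L^2}\lesssim t^{-1/2}N\bigl(\|u_N(t)\|_{L^2}+\|J\partial_x u_N(t)\|_{L^2}\bigr),
\]
is not a direct consequence of $\|J\partial_x u\|_{L^2}$ either: it requires the factorization $J_-\partial_x J_+\partial_x=-J\partial_x^2-\tfrac12\partial_x$ together with the identity
\[
\Bigl\|\sqrt{\tfrac{|x|}{t}}\,\partial_x f\Bigr\|_{L^2}^2+\|f\|_{L^2}^2
= t^{-1}\|J_-\partial_x f\|_{L^2}^2+2\,\mathrm{Im}\!\int\sqrt{\tfrac{|x|}{t}}\,f\,\overline{\partial_x f}\,dx,
\]
applied with $f=J_+\partial_x\uhp_N$, where the cross term has a favorable sign precisely because $\uhp_N$ lives at \emph{positive} frequencies. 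This sign condition is what makes $J_+$ ``hyperbolic'' on positive frequencies actually usable, and it is absent from your sketch.
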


\begin{proof}
For $0<t<1$ and $s >\frac{3}{2}$, Sobolev's inequality yields
\begin{align*}
|u(t,x)| + |u_x(t,x)|
& \lesssim \sum _{N \in 2^{\delta \mathbb{Z}}} (1+N) \| u_N (t) \|_{L^{\infty}} \\
& \lesssim \sum_{N \in 2^{\delta \mathbb{Z}}} (1+N)N^{\frac{1}{2}} \| u_{N} (t) \|_{L^2}
\lesssim \| u (t) \| _{H^s} .
\end{align*}

For the high frequency case $N > t \ge 1$, we note that $u= \ue$ or $\uh=0$, because of the frequency restriction of $\uh$.
The calculation used above yields
\begin{align*}
& |u(t,x)|
\lesssim \sum _{\substack{N \in 2^{\delta \mathbb{Z}} \\ N > t}} \| u_N (t) \| _{L^{\infty}}
\lesssim \sum_{\substack{N \in 2^{\delta \mathbb{Z}} \\ N > t}} N ^{\frac{1}{2}} \| u_{N} (t) \|_{L^2}
\lesssim t^{-\frac{2s-1}{2}} \| u (t) \| _{H^s}, \\
& |u_x(t,x)|
\lesssim \sum _{\substack{N \in 2^{\delta \mathbb{Z}} \\ N > t}} N \| u_N (t) \| _{L^{\infty}}
\lesssim \sum_{\substack{N \in 2^{\delta \mathbb{Z}} \\ N > t}} N ^{\frac{3}{2}} \| u_{N} (t) \|_{L^2}
\lesssim t^{-\frac{2s-3}{2}} \| u (t) \| _{H^s}.
\end{align*}

Next, we consider the case of $t \ge 1$ and $N \le t$.
This is the main focus of our work.

\begin{lem} \label{lem:he_freq_est}
For $t \ge 1$ and $N \le t$, we have
\begin{align*}
& \| J_+ \partial_x \uhp _{N} (t) \|_{L^2} \lesssim t^{-\frac{1}{2}} N ( \| u_N (t) \|_{L^2} + \| J \partial_x u _N (t) \|_{L^2}), \\
& \Big\| \Lr{N^2 \frac{x}{t}} \ue_{N} (t) \Big\|_{L^2} \lesssim t^{-1} N ( \| u_{N} (t) \|_{L^2} + \| J \partial_x u_{N} (t) \|_{L^2}).
\end{align*}
\end{lem}

\begin{proof}
For the hyperbolic estimate, we use the equation
\begin{equation} \label{eq:J-f}
\Big\| \sqrt{\frac{|x|}{t}} \partial_x f \Big\|_{L^2}^2 + \| f \|_{L^2}^2
= t^{-1} \| J_- \partial_x f \|_{L^2}^2 + 2\Im \int_{\R} \sqrt{\frac{|x|}{t}} f(x) \partial_x \overline{f}(x) dx.
\end{equation}
We apply this to $f=J_+ \partial_x \uhp_{N}$.
A direct calculation yields
\[
J_- \partial_x f
= J_- \partial_x J_+ \partial_x \uhp_{N}
= -J \partial_x^2 \uhp_{N} - \frac{1}{2} \partial_x \uhp_N .
\]
Because
\begin{align*}
J \partial_x^2 \uhp_{N}
& = \sigma^{\rm{hyp}}_N P_{\frac{N}{2^{\delta}} \le \cdot \le 2^{\delta}N}^+ (J\partial_x^2 u_N) + \sigma^{\rm{hyp}}_N \mathcal{F}^{-1} [ -i (\sigma^+_{\frac{N}{2^{\delta}} \le \cdot \le 2^{\delta}N})' \xi ^2 \wh{u}_N] \\
& \quad + 2 \partial_x (x \partial_x \sigma^{\rm{hyp}}_N u_N^+) - (2 \partial_x \sigma^{\rm{hyp}}_N + x \partial_x^2 \sigma^{\rm{hyp}}_N ) u_N^+,
\end{align*}
Lemma \ref{lem:freq_spat_loc} implies the following:
\begin{equation} \label{eq:Jd^2}
\begin{aligned}
& \| J \partial_x^2 \uhp_N (t) \|_{L^2} \\
& \lesssim N ( \| u_N (t) \|_{L^2} + \| J \partial_x u_N (t) \|_{L^2}) + \| (1-P^+_{\frac{N}{2^{\delta}} \le \cdot \le 2^{\delta}N}) \partial_x (x \partial_x \sigma^{\rm{hyp}}_N u_N^+) (t) \|_{L^2} \\
& \quad + \| (1-P^+_{\frac{N}{2^{\delta}} \le \cdot \le 2^{\delta}N}) ( x \partial_x^2 \sigma^{\rm{hyp}}_N u_N^+ ) (t) \|_{L^2} \\
& \lesssim N ( \| u_N (t) \|_{L^2} + \| J \partial_x u_N (t) \|_{L^2}) .
\end{aligned}
\end{equation}
From $\partial_x \uhp_N = \partial_x \sigma^{\rm{hyp}}_N u_N^+ + \sigma^{\rm{hyp}}_N \partial_x u_N^+$, we have
\[
\| \partial_x \uhp_N (t) \|_{L^2}
\lesssim t^{-1} N^2 \| u_N (t) \|_{L^2} + N \| u_N (t) \|_{L^2}
\sim N \| u_N (t) \|_{L^2}
\]
This yields
\[
\| J_- \partial_x J_+ \partial_x \uhp_{N} (t) \|_{L^2}
\lesssim N (\| u_N (t) \|_{L^2} + \| J \partial_x u _N (t) \|_{L^2}) .
\]
The second expression on the right hand side of \eqref{eq:J-f} becomes
\begin{align*}
& \Im \int_{\R} \sqrt{\frac{|x|}{t}} J_+ \partial_x \uhp_N (t,x) \partial_x \overline{J_+ \partial_x \uhp_N} (t,x) dx \\
& = t^{-\frac{1}{2}} \Im \int_{\R} |x|^{\frac{1}{4}} J_+ \partial_x \uhp_N (t,x) \partial_x \left\{ |x|^{\frac{1}{4}} \overline{J_+ \partial_x \uhp_N} \right\} (t,x) dx \\
& = -t^{-\frac{1}{2}} \Re \int_{\R} \xi |\mathcal{F} [|x|^{\frac{1}{4}} J_+ \partial_x \uhp_N] (t,\xi)|^2 d\xi .
\end{align*}
From $|x|^{\frac{1}{4}} J_+ \partial_x \uhp_N = \partial_x (|x|^{\frac{3}{4}} \uhp_N) + \frac{3}{4} |x|^{-\frac{1}{4}} \uhp_N -i t^{\frac{1}{2}} |x|^{\frac{1}{4}} \uhp_N$ and Lemma \ref{lem:freq_spat_loc}, we have
\begin{align*}
& t^{-\frac{1}{4}} \| (1-P_{\frac{N}{2^{\delta}} \le \cdot \le 2^{\delta}N}^+) |\partial_x|^{\frac{1}{2}} |x|^{\frac{1}{4}} J_+ \partial_x \uhp_N (t) \|_{L^2} \\
& \lesssim t^{-1} N \| u_N (t) \|_{L^2}
\le t^{-\frac{1}{2}} N \| u_N (t) \|_{L^2}.
\end{align*}
Taking $f= J_- \partial_x \uhp_N$ in \eqref{eq:J-f}, we obtain the desired hyperbolic bound.

For the elliptic bound, we decompose $\ue$ into three parts $\ue = \sigma _{<\frac{1}{3} \frac{t}{N^2}} \ue_{N} + \sigma _{\frac{1}{3} \frac{t}{N^2} \le \cdot \le 3 \frac{t}{N^2}} \ue_N + \sigma _{>3 \frac{t}{N^2}} \ue_N$.
We observe that the equation
\begin{equation} \label{eq:ueob}
\left\| \frac{x}{t} f_{xx} \right\|_{L^2}^2 + \| f \|_{L^2}^2 = t^{-2} \| J \partial_x^2 f \|_{L^2}^2 - 2 \int_{\R} \frac{x}{t} |\partial_x f(x)|^2 dx
\end{equation}
holds for any smooth real valued function $f$.

From $(a+b)^2 \le (1+\delta) a^2+ (1+\delta^{-1}) b^2$ and Lemma \ref{lem:freq_spat_loc}, we have
\begin{align*}
& \left| \int_{\R} \frac{x}{t} | \partial_x ( \sigma _{>3 \frac{t}{N^2}} \ue_{N} ) (t,x)|^2 dx \right| \\
& \le \frac{N^2}{3} \left\| \frac{x}{t} \partial_x ( \sigma _{> 3\frac{t}{N^2}} \ue_{N} ) (t) \right\|_{L^2}^2 \\
& \le \frac{(1+\delta) 2^{4\delta}}{3} \left\| P_{\frac{N}{2^{\delta}} \le \cdot \le 2^{\delta}N} \partial_x \left( \frac{x}{t} \partial_x ( \sigma _{> 3 \frac{t}{N^2}} \ue_{N}) \right) (t) \right\|_{L^2}^2 \\
& \quad + C t^{-2}N^2 \| (1-P_{\frac{N}{2^{\delta}} \le \cdot \le 2^{\delta}N}) \left( x \partial_x (\sigma _{>3 \frac{t}{N^2}} \ue_{N}) \right) (t) \|_{L^2}^2 \\
& \le \frac{(1+\delta)^2 2^{4\delta}}{3} \left\| \frac{x}{t} \partial_x^2 \left( \sigma _{>3 \frac{t}{N^2}} \ue_{N} \right) (t) \right\|_{L^2}^2 + C t^{-2}N^2 \| u_{N} (t) \|_{L^2}^2 .
\end{align*}
Because $\sigma _{>3 \frac{t}{N^2}} \ue_{N} = \sigma _{>3 \frac{t}{N^2}} u_{N}$ and $\supp \partial_x \sigma_{> 3 \frac{t}{N^2}} \subset \{ |x| \sim t N^{-2} \}$, the calculation used in \eqref{eq:Jd^2} and Lemma \ref{lem:freq_spat_loc} yields
\[
\| J \partial_x^2 (\sigma _{> 3 \frac{t}{N^2}} \ue_{N}) (t) \|_{L^2}
\lesssim N ( \| u_N (t) \|_{L^2} + \| J \partial_x u_N (t) \|_{L^2} ) .
\]
Taking $f= \sigma _{>3 \frac{t}{N^2}} \ue_{N}$ in \eqref{eq:ueob}, and by $2 \frac{(1+\delta)^2 2^{4\delta}}{3} <1$, we have
\[
\left\| \frac{x}{t} \partial_x^2 (\sigma _{>3 \frac{t}{N^2}} \ue_{N}) (t) \right\|_{L^2}
\lesssim t^{-1} N ( \| u_N (t) \|_{L^2} + \| J \partial_x u_N (t) \|_{L^2} ) .
\]
Hence, by Lemma \ref{lem:freq_spat_loc}, we have
\begin{align*}
N^2 \Big \| \frac{x}{t} \sigma _{> 3 \frac{t}{N^2}} \ue_N (t) \Big\|_{L^2}
& \lesssim \Big \| P_{\frac{N}{2^{\delta}} \le \cdot \le 2^{\delta}N} \partial_x^2 \left( \frac{x}{t} \sigma _{> 3 \frac{t}{N^2}} \ue_N \right) (t) \Big\|_{L^2} \\
& \quad + t^{-1} N^2 \Big \| (1-P_{\frac{N}{2^{\delta}} \le \cdot \le 2^{\delta}N}) \left( x \sigma _{> 3 \frac{t}{N^2}} \ue_N \right) (t) \Big\|_{L^2} \\
& \lesssim \Big\| \frac{x}{t} \partial_x^2 \left( \sigma _{> 3 \frac{t}{N^2}} \ue_N \right) (t) \Big\|_{L^2} + t^{-1} N \| u_N (t) \|_{L^2} \\
& \lesssim t^{-1} N ( \| u_N (t) \|_{L^2} + \| J \partial_x u_N (t) \|_{L^2} ) .
\end{align*}

From $(a+b)^2 \le (1+\delta) a^2+ (1+\delta^{-1}) b^2$ and Lemma \ref{lem:freq_spat_loc}, we have
\begin{align*}
& \left| \int_{\R} \frac{x}{t} | \partial_x ( \sigma _{<\frac{1}{3} \frac{t}{N^2}} \ue_{N} ) (t,x)|^2 dx \right| \\
& \le \frac{1}{3 N^2} \| \partial_x ( \sigma _{<\frac{1}{3} \frac{t}{N^2}} \ue_{N} ) (t) \|_{L^2}^2 \\
& \le \frac{(1+\delta) 2^{4\delta}}{3} \| P_{\frac{N}{2^{\delta}} \le \cdot \le 2^{\delta}N} ( \sigma _{<\frac{1}{3} \frac{t}{N^2}} \ue_{N} ) (t) \|_{L^2}^2  \\
& \quad + C \frac{1}{N^2} \| (1-P_{\frac{N}{2^{\delta}} \le \cdot \le 2^{\delta}N})  \partial_x ( \sigma _{<\frac{1}{3} \frac{t}{N^2}} \ue_{N} ) (t) \|_{L^2}^2 \\
& \le \frac{(1+\delta) 2^{4\delta}}{3} \| \sigma _{<\frac{1}{3} \frac{t}{N^2}} \ue_{N} (t) \|_{L^2}^2 + C t^{-2} N^2 \| u_N (t) \|_{L^2}^2 .
\end{align*}
From the calculation used in \eqref{eq:Jd^2}, we have
\[
\| J \partial_x^2 (\sigma _{<\frac{1}{3} \frac{t}{N^2}} \ue_{N}) (t) \|_{L^2}
\lesssim N ( \| u_N (t) \|_{L^2} + \| J \partial_x u_N (t) \|_{L^2}) .
\]
Taking $f= \sigma _{<\frac{1}{3} \frac{t}{N^2}} \ue_{N}$ in \eqref{eq:ueob}, and by $2 \frac{(1+\delta) 2^{4\delta}}{3} <1$, we have
\[
\| \sigma _{<\frac{1}{3} \frac{t}{N^2}} \ue_{N} (t) \|_{L^2}
\lesssim t^{-1} N ( \| u_N (t) \|_{L^2} + \| J \partial_x u_N (t) \|_{L^2} ) .
\]

Because
\[
-\int_{\R} \frac{x}{t} | \sigma _{\frac{1}{3} \frac{t}{N^2} \le \cdot \le 3 \frac{t}{N^2}} (x) \ue_{N} (t,x)|^2 dx <0,
\]
and applying the calculation used in \eqref{eq:Jd^2} yields
\[
\| J \partial_x^2 (\sigma _{\frac{1}{3} \frac{t}{N^2} \le \cdot \le 3 \frac{t}{N^2}} \ue_{N}) (t) \|_{L^2}
\lesssim N ( \| u_N (t) \|_{L^2} + \| J \partial_x u_N (t) \|_{L^2}) ,
\]
taking $f=\sigma _{\frac{1}{3} \frac{t}{N^2} \le \cdot \le 3 \frac{t}{N^2}} \ue_{N}$ in \eqref{eq:ueob} gives
\[
\| \sigma _{\frac{1}{3} \frac{t}{N^2} \le \cdot \le 3 \frac{t}{N^2}} \ue_{N} (t) \|_{L^2}
\lesssim t^{-1} N ( \| u_N (t) \|_{L^2} + \| J \partial_x u_N (t) \|_{L^2} ) .
\]
\end{proof}

Set $\phi (t,x) := -2 \sqrt{t |x|}$.
The Gagliardo-Nirenberg inequality
\[
|f| \lesssim \| f \|_{L^2}^{\frac{1}{2}} \| \partial_x f \|_{L^2}^{\frac{1}{2}}
\]
with $f = e^{-i\phi} \uhp_{N}$, $\partial_x (e^{-i\phi} \uhp) = e^{-i\phi} \frac{1}{\sqrt{|x|}} J_+ \partial_x \uhp$, and Lemma \ref{lem:he_freq_est} imply
\begin{align*}
| \uhp_{N} (t,x) |
& \lesssim \| \uhp_{N} (t) \|_{L^2}^{\frac{1}{2}} \left\| \frac{1}{\sqrt{|x|}} J_+ \partial_x \uhp_{N} (t) \right\|_{L^2}^{\frac{1}{2}} \\
& \sim t^{-\frac{1}{4}} N^{\frac{1}{2}} \| u_{N} (t) \|_{L^2}^{\frac{1}{2}} \| J_+ \partial_x \uhp_{N} (t) \|_{L^2}^{\frac{1}{2}} \\
& \lesssim t^{-\frac{1}{2}} N \| u_{N} (t) \|_{L^2}^{\frac{1}{2}} (\| u_{N} (t) \|_{L^2} + \| J \partial_x u_{N} (t) \| )^{\frac{1}{2}} \\
& \lesssim t^{-\frac{1}{2}} \min (N^{-\frac{s}{2}+1}, N^{\frac{3}{2}}) \| u (t) \|_{X^s}.
\end{align*}
By
\[
J_+ \partial_x^2 \uhp_N (t,x)
= \partial_x J_+ \partial_x \uhp_N (t,x) + \frac{1}{2\sqrt{|x|}} \partial_x \uhp_N (t,x),
\]
Lemmas \ref{lem:freq_spat_loc} and \ref{lem:he_freq_est} imply
\begin{align*}
& \| J_+ \partial_x^2 \uhp_N (t) \|_{L^2} \\
& \lesssim \| P_{\frac{N}{2^{\delta}} \le \cdot \le 2^{\delta} N} \partial_x J_+ \partial_x \uhp_N (t) \|_{L^2} + \| (1-P_{\frac{N}{2^{\delta}} \le \le 2^{\delta} N}) \partial_x J_+ \partial_x \uhp_N (t) \|_{L^2} \\
& \quad + t^{-\frac{1}{2}} N^2 \| u_N (t) \|_{L^2} \\
& \lesssim N \| J_+ \partial_x \uhp_N (t) \|_{L^2} + t^{-\frac{1}{2}} N^2 \| u_N (t) \|_{L^2} \\
& \lesssim t^{-\frac{1}{2}} N^2 \| u_N (t) \|_{X^0} .
\end{align*}
Hence, we have
\begin{align*}
| \partial_x \uhp_{N} (t,x) |
\lesssim t^{-\frac{1}{2}} N^{2} \| u_N (t) \|_{L^2}^{\frac{1}{2}} \| u (t) \|_{X^0}^{\frac{1}{2}}
\lesssim t^{-\frac{1}{2}} \min(N^{-\frac{s}{2}+2}, N^{\frac{5}{2}}) \| u (t) \|_{X^s} .
\end{align*}
Because $\uhp (t,x)$ is a finite sum of $\uhp_N(t,x)$, we obtain the desired hyperbolic bounds.

Next, we show the elliptic bounds.
For $|x|\le \frac{t}{N^2}$, the Gagliardo-Nirenberg inequality and Lemmas \ref{lem:freq_spat_loc} and \ref{lem:he_freq_est} yield
\begin{align*}
|\ue_N(t,x)|
& = |\sigma_{\le \frac{t}{N^2}} \ue_N (t,x)| \\
& \le | P_{\frac{N}{2^{\delta}} \le \cdot \le 2^{\delta}N} \sigma_{\le \frac{t}{N^2}} \ue_N (t,x)| + | (1- P_{\frac{N}{2^{\delta}} \le \cdot \le 2^{\delta}N}) \sigma_{\le \frac{t}{N^2}} \ue_N (t,x)| \\
& \lesssim N^{\frac{1}{2}} \| \sigma_{\le \frac{t}{N^2}} \ue_N (t) \|_{L^2} + t^{-1} N^{\frac{3}{2}} \| u_N \|_{L^2} \\
& \lesssim t^{-\theta} \| |\partial_x|^{\frac{1+2\theta}{2-2\theta}} u_N (t) \|_{L^2}^{1-\theta} \| u_{N} (t) \|_{X^0}^{\theta} + t^{-1} N^{\frac{3}{2}} \| u_N \|_{L^2} ,
\end{align*}
where $0< \theta <1$.
For $|x|\ge \frac{t}{N^2}$, there exists $M \in 2^{\mathbb{N} \cup \{ 0 \}}$ such that $\ue_N (t,x) = \sigma_{\frac{t}{N^2}M}(x) \ue_N (t,x)$.
The calculation used above leads
\[
|\ue_N(t,x)|
\lesssim t^{-\theta} \Lr{N^2 \frac{x}{t}}^{-\theta} \| |\partial_x|^{\frac{1+2\theta}{2-2\theta}} u_N (t) \|_{L^2}^{1-\theta} \| u_{N} (t) \|_{X^0}^{\theta} + t^{-1} N^{\frac{3}{2}} \| u_N \|_{L^2} .
\]
Because
\[
\sum _{\substack{N \in 2^{\delta \mathbb{Z}} \\ N \le t}} \Lr{N^2 \frac{|x|}{t}}^{-\theta}
\le \sum _{\substack{N \in 2^{\delta \mathbb{Z}} \\ N \le t}} \
\lesssim 1+\log t,
\]
by setting $\theta = \frac{2s-1}{2s+2}$, we obtain
\[
| \ue (t,x) |
\le \sum_{\substack{N \in 2^{\delta \mathbb{Z}} \\ N \le t}} | \ue_{N} (t,x) |
\lesssim t^{-\frac{2s-1}{2s+2}} \left( 1+\log t \right) \| u (t) \|_{X^s}.
\]

From
\[
\sigma_{\le \frac{t}{N^2}} (x) \ue_x (t,x) = \partial_x (\sigma_{\le \frac{t}{N^2}} \ue) (t,x) - \frac{N^2}{t} \sigma' \left( N^2 \frac{x}{t} \right) \ue (t,x) ,
\]
the calculation used for $\ue_N$ yields, for $0 < \theta <1$,
\[
|\partial_x \ue_N (t,x)|
\lesssim 
t^{-\theta} \| |\partial_x|^{\frac{3+2\theta}{2-2\theta}} \ue_N (t) \|_{L^2}^{1-\theta} \| \ue_{N} (t) \|_{X^0}^{\theta} + t^{-1} N^{\frac{5}{2}} \| u_N \|_{L^2}
\]
if $|x| \le \frac{t}{N^2}$ and
\[
|\partial_x \ue_N (t,x)|
\lesssim 
t^{-\theta} \Lr{N^2 \frac{|x|}{t}}^{-\theta} \| |\partial_x|^{\frac{3+2\theta}{2-2\theta}} \ue_N (t) \|_{L^2}^{1-\theta} \| \ue_{N} (t) \|_{X^0}^{\theta} + t^{-1} N^{\frac{5}{2}} \| u_N \|_{L^2}
\]
if $|x| \ge \frac{t}{N^2}$.
Hence, setting $\theta = \frac{2s-3}{2s+2}$ gives
\[
| \ue_x (t,x) |
\lesssim \sum_{\substack{N \in 2^{\delta \mathbb{Z}} \\ N \le t}} | \partial_x \ue_{N} (t,x) |
\lesssim t^{-\frac{2s-3}{2s+2}} \left( 1+\log t \right) \| u (t) \|_{X^s}.
\]
\end{proof}

\begin{cor} \label{cor:uh}
For $s \ge 0$ and $t \ge 1$, we have
\[
\| \sqrt{|x|} J_+ \partial_x \uhp (t) \| _{L^2} \lesssim \| u (t) \|_{X^s} , \quad
\| x J_+ \partial_x \uhp_x (t) \|_{L^2} \lesssim t^{\frac{1}{2}} \| u (t) \|_{X^s} .
\]
\end{cor}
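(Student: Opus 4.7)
My plan is to reduce the two claims to the dyadic pieces $\uhp_N$ introduced just before Lemma \ref{lem:he_freq_est}, exploit the spatial support $\supp\sigma_N^{\rm{hyp}}\subset\{|x|\sim t/N^2\}$, and invoke the $L^2$ estimates for $J_+\partial_x\uhp_N$ and $J_+\partial_x^2\uhp_N$ already established in Lemma \ref{lem:he_freq_est} and inside the proof of Proposition \ref{prop:est|u|}.

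The key structural observation I would record at the outset is the algebraic identity $J_+\partial_x=\sqrt{|x|}\,\partial_x - i\sqrt{t}$ (since $\partial_x^{-1}\partial_x=I$), which kills the nonlocal part of $J_+$. Consequently both $J_+\partial_x\uhp_N$ and $J_+\partial_x^2\uhp_N=\sqrt{|x|}\,\partial_x^2\uhp_N - i\sqrt{t}\,\partial_x\uhp_N$ remain supported in $\supp\sigma_N^{\rm{hyp}}$. Combined with the finite multiplicity in $\uhp(t,x)=\sum_N\uhp_N(t,x)$ noted in the text (at any $(t,x)$, at most $5/\delta$ dyadic $N$ contribute), Cauchy--Schwarz yields the pointwise almost-orthogonality
\[
\bigl|\sqrt{|x|}\,J_+\partial_x\uhp(t,x)\bigr|^2+\bigl|x\,J_+\partial_x^2\uhp(t,x)\bigr|^2\lesssim \sum_{N\le t}\bigl(|\sqrt{|x|}\,J_+\partial_x\uhp_N|^2+|x\,J_+\partial_x^2\uhp_N|^2\bigr),
\]
so each claim reduces to a square sum over $N$ of frequency-localized bounds.

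For the first estimate, $\sqrt{|x|}\lesssim t^{1/2}/N$ on $\supp\sigma_N^{\rm{hyp}}$, so Lemma \ref{lem:he_freq_est} gives
\[
\|\sqrt{|x|}\,J_+\partial_x\uhp_N(t)\|_{L^2}\lesssim \frac{t^{1/2}}{N}\cdot t^{-1/2}N\bigl(\|u_N(t)\|_{L^2}+\|J\partial_x u_N(t)\|_{L^2}\bigr)\lesssim \|u_N(t)\|_{X^0},
\]
and the Littlewood--Paley equivalence $\sum_N\|u_N\|_{X^0}^2\sim\|u\|_{X^0}^2\le \|u\|_{X^s}^2$ (valid for $s\ge 0$) closes the square sum. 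For the second, $|x|\lesssim t/N^2$ on $\supp\sigma_N^{\rm{hyp}}$ together with the bound $\|J_+\partial_x^2\uhp_N(t)\|_{L^2}\lesssim t^{-1/2}N^2\|u_N(t)\|_{X^0}$ obtained inside the proof of Proposition \ref{prop:est|u|} gives
\[
\|x\,J_+\partial_x^2\uhp_N(t)\|_{L^2}\lesssim \frac{t}{N^2}\cdot t^{-1/2}N^2\|u_N(t)\|_{X^0}= t^{1/2}\|u_N(t)\|_{X^0},
\]
after which the same square sum produces $t^{1/2}\|u\|_{X^s}$. Since every quantitative input is already in place, I do not anticipate a genuine obstacle: the only point requiring care is verifying the locality of $J_+\partial_x$ and $J_+\partial_x^2$ that underpins the dyadic almost-orthogonality, after which the estimates are bookkeeping.
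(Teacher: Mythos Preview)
Your proof is correct, and it follows a genuinely different---and somewhat cleaner---route from the paper's.

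The paper does not use the locality of $J_+\partial_x$ at all. Instead, it achieves almost-orthogonality on the \emph{frequency} side: for the first inequality it writes
\[
\|\sqrt{|x|}\,J_+\partial_x\uhp\|_{L^2}
\lesssim \Big(\sum_{N\le t}\|\sqrt{|x|}\,J_+\partial_x\uhp_N\|_{L^2}^2\Big)^{1/2}
+\sum_{N\le t}\big\|(1-P^+_{N/2^\delta\le\cdot\le 2^\delta N})\sqrt{|x|}\,J_+\partial_x\uhp_N\big\|_{L^2},
\]
controlling the square sum via Littlewood--Paley and the off-diagonal remainder in $\ell^1$ using Lemma~\ref{lem:freq_spat_loc}. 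For the second inequality it first expands $|x|J_+\partial_x^2\uhp_N=\partial_x(|x|J_+\partial_x\uhp_N)+J_+\partial_x\uhp_N+\tfrac{1}{2}\sqrt{|x|}\,\partial_x\uhp_N$ and repeats the same frequency-side splitting.

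You instead observe the algebraic identity $J_+\partial_x=\sqrt{|x|}\,\partial_x-i\sqrt{t}$, which makes $J_+\partial_x\uhp_N$ and $J_+\partial_x^2\uhp_N$ supported in $\supp\sigma_N^{\rm hyp}$; the bounded spatial overlap of these cutoffs (already noted in the text) then gives pointwise almost-orthogonality with no remainder term. This avoids any direct appeal to Lemma~\ref{lem:freq_spat_loc} in the summation step, though you do still use it implicitly through the bound $\|J_+\partial_x^2\uhp_N\|_{L^2}\lesssim t^{-1/2}N^2\|u_N\|_{X^0}$ borrowed from the proof of Proposition~\ref{prop:est|u|}. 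The paper's approach is more in the spirit of the surrounding arguments (frequency localization via Lemma~\ref{lem:freq_spat_loc}), while yours exploits a structural fact about $J_+\partial_x$ that the paper does not isolate; both reach the same dyadic estimates and close identically via $\sum_N\|u_N\|_{X^0}^2\sim\|u\|_{X^0}^2$.
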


\begin{proof}
By Lemmas \ref{lem:freq_spat_loc} and \ref{lem:he_freq_est},
\begin{align*}
& \| \sqrt{|x|} J_+ \partial_x \uhp \| _{L^2} \\
& \lesssim \bigg( \sum _{\substack{N \in 2^{\delta \mathbb{Z}} \\ N \le t}} \| \sqrt{|x|} J_+ \partial_x \uhp_N (t) \| _{L^2}^2 \bigg)^{\frac{1}{2}} \\
& \quad + \sum _{\substack{N \in 2^{\delta \mathbb{Z}} \\ N \le t}} \| (1-P_{\frac{N}{2^{\delta}} \le \cdot \le 2^{\delta}N}) \sqrt{|x|} J_+ \partial_x \uhp_N (t) \| _{L^2} \\
& \lesssim \bigg( \sum _{\substack{N \in 2^{\delta \mathbb{Z}} \\ N \le t}} tN^{-2} \| J_+ \partial_x \uhp_N (t) \| _{L^2}^2 \bigg)^{\frac{1}{2}} + \sum _{\substack{N \in 2^{\delta \mathbb{Z}} \\ N \le t}} t^{-1} N \| u_N (t) \|_{L^2} \\
& \lesssim \| u (t) \|_{X^s}.
\end{align*}
Similarly, from $|x| J_+ \partial_x^2 \uhp_N = \partial_x (|x| J_+ \partial_x\uhp_N) + J_+ \partial_x\uhp_N + \frac{\sqrt{|x|}}{2} \partial_x \uhp_N$, we have
\begin{align*}
& \| x J_+ \partial_x \uhp _x \| _{L^2} \\
& \lesssim \bigg( \sum _{\substack{N \in 2^{\delta \mathbb{Z}} \\ N \le t}} (t N^{-1}+1)^2 \| J_+ \partial_x \uhp_N (t) \| _{L^2}^2 \bigg)^{\frac{1}{2}} + t^{\frac{1}{2}} \bigg( \sum _{\substack{N \in 2^{\delta \mathbb{Z}} \\ N \le t}} \| \uhp_N (t) \|_{L^2}^2 \bigg) ^{\frac{1}{2}} \\
& \qquad +\sum _{\substack{N \in 2^{\delta \mathbb{Z}} \\ N \le t}} \| (1-P_{\frac{N}{2^{\delta}} \le \cdot \le 2^{\delta}N}) |x| J_+ \partial_x^2 \uhp_N (t) \| _{L^2} \\
& \lesssim t^{\frac{1}{2}} \| u (t) \|_{X^s}.
\end{align*}
\end{proof}

\section{Wave packets} \label{S:wave_packet}

We consider the Hamiltonian flow corresponding to \eqref{sp}, which is given by
\[
(x,\xi) \mapsto \Big( x-\frac{t}{\xi^2},\xi \Big) .
\]
We expect solutions initially localized spatially near zero and in frequency near $\pm \xi_v$, where $\xi_v := \frac{1}{\sqrt{|v|}}$, to travel along the ray $\Gamma _{v} := \{ x=vt \}$ when $v<0$.
This produces a phase function
\[
\phi (t,x) := -2 \sqrt{t|x|}
\]
associated with the linear propagator $e^{t \partial_x^{-1}}$.

For $v \in \R_-$, we define
\[
\Psi _v(t,x) := |v|^{-\frac{3}{4}} \chi \left( \frac{x-vt}{t^{\frac{1}{2}} |v|^{\frac{3}{4}}} \right) e^{i \phi (t,x)},
\]
where $\chi$ is a sooth function with $\supp \chi \subset [-1+2^{-\delta}, 1-2^{-\delta}]$ and $\int_{\R} \chi dx =1$.
The spatial support of $\Psi_v$ is included in $[2^{\delta}vt, \frac{vt}{2^{\delta}}]$, provided that $v \in \R_-$ and $|v| \ge t^{-2}$.

Let
\[
\Omega_{\alpha} (t) := \{ v \in \R_- : t^{-\alpha} \le -v \le t^{\alpha} \}
\]
for $t \ge 1$ and $\alpha >0$.
For $v \in \Omega_2 (t)$,
\[
\partial_t \Psi _v (t,x)
= - \frac{x+vt}{2 t^{\frac{3}{2}} |v|^{\frac{3}{2}}} \chi' \left( \frac{x-vt}{t^{\frac{1}{2}} |v|^{\frac{3}{4}}} \right) e^{i\phi (t,x)} - i t^{-\frac{1}{2}} |v|^{-\frac{3}{4}} \sqrt{|x|} \chi \left( \frac{x-vt}{t^{\frac{1}{2}} |v|^{\frac{3}{4}}} \right) e^{i\phi (t,x)} .
\]
Integrating by parts three times gives
\begin{align*}
& \partial_x^{-1} \Psi_v (t,x) \\
&=-i t^{-\frac{1}{2}} |v|^{-\frac{3}{4}} \sqrt{|x|} \chi \left( \frac{x-vt}{t^{\frac{1}{2}} |v|^{\frac{3}{4}}} \right) e^{i\phi (t,x)} -\frac{1}{2} t^{-1} |v|^{-\frac{3}{4}} \chi \left( \frac{x-vt}{t^{\frac{1}{2}} |v|^{\frac{3}{4}}} \right) e^{i\phi (t,x)} \\
& \quad + t^{-\frac{3}{2}} |v|^{-\frac{3}{2}} |x| \chi' \left( \frac{x-vt}{t^{\frac{1}{2}} |v|^{\frac{3}{4}}} \right) e^{i\phi (t,x)} + i t^{-2} |v|^{-\frac{3}{2}} \partial_x \left\{ |x|^{\frac{3}{2}} \chi' \left( \frac{x-vt}{t^{\frac{1}{2}} |v|^{\frac{3}{4}}} \right) \right\} e^{i\phi (t,x)} \\
& \quad - i t^{-2} |v|^{-\frac{3}{2}} \partial_x^{-1} \left( \partial_x ^2 \left\{ |x|^{\frac{3}{2}} \chi' \left( \frac{x-vt}{t^{\frac{1}{2}} |v|^{\frac{3}{4}}} \right) \right\} e^{i\phi (t,x)} \right) .
\end{align*}
Hence, we have
\begin{equation} \label{eq:LPsi}
\begin{aligned}
& (L \Psi_v) (t,x) \\
& = t^{-1} \left\{ \frac{1}{2 |v|^{\frac{3}{4}}} \chi \left( \frac{x-vt}{t^{\frac{1}{2}} |v|^{\frac{3}{4}}} \right) + \frac{x-vt}{2 t^{\frac{1}{2}} |v|^{\frac{3}{2}}} \chi' \left( \frac{x-vt}{t^{\frac{1}{2}} |v|^{\frac{3}{4}}} \right) \right\} e^{i\phi (t,x)} \\
& \quad - i t^{-2} |v|^{-\frac{3}{2}} \partial_x \left\{ |x|^{\frac{3}{2}} \chi' \left( \frac{x-vt}{t^{\frac{1}{2}} |v|^{\frac{3}{4}}} \right) \right\} e^{i\phi (t,x)} \\
& \quad + i t^{-2} |v|^{-\frac{3}{2}} \partial_x^{-1} \left( \partial_x ^2 \left\{ |x|^{\frac{3}{2}} \chi' \left( \frac{x-vt}{t^{\frac{1}{2}} |v|^{\frac{3}{4}}} \right) \right\} e^{i\phi (t,x)} \right) \\
& = t^{-1} (\partial_x \wt{\chi}) (t,x) e^{i\phi (t,x)} + i t^{-2} |v|^{-\frac{3}{2}} \partial_x^{-1} \left( \partial_x ^2 \left\{ |x|^{\frac{3}{2}} \chi' \left( \frac{x-vt}{t^{\frac{1}{2}} |v|^{\frac{3}{4}}} \right) \right\} e^{i\phi (t,x)} \right) 
\end{aligned}
\end{equation}
where
\[
\wt{\chi}(t,x) := \frac{x-vt}{2|v|^{\frac{3}{4}}} \chi \left( \frac{x-vt}{t^{\frac{1}{2}} |v|^{\frac{3}{4}}} \right) -i \frac{|x|^{\frac{3}{2}}}{t |v|^{\frac{3}{2}}} \chi' \left( \frac{x-vt}{t^{\frac{1}{2}} |v|^{\frac{3}{4}}} \right) .
\]

We show that $\Psi_v(t,x)$ and the first part of $L \Psi_v (t,x)$ are essentially frequency localized near $\xi_v$.
To state this more precisely, for $v \in \Omega_2(t)$ we define by $N_{v} \in 2^{\delta \mathbb{Z}}$ the nearest scaled dyadic number to $\xi_v$.
Then, $\frac{\xi_v}{2^{\delta}} < N_v < 2^{\delta} \xi_v$ holds.

\begin{lem} \label{lem:freq_psi}
For $t \ge 1$ and $v \in \Omega_2(t)$, we have
\begin{align*}
& \| (1-P_{\frac{N_v}{2^{\delta}} \le \cdot \le 2^{\delta}N_v}^+) \Psi_v(t) \|_{L^2}
\lesssim_c t^{\frac{1}{4}} |v|^{-\frac{3}{8}} ( t^{\frac{1}{2}} |v|^{\frac{1}{4}} )^{-c} , \\
& \| (1-P_{\frac{N_v}{2^{\delta}} \le \cdot \le 2^{\delta}N_v}^+) e^{i\phi (t)} (\partial_x \wt{\chi}) (t) \|_{L^2}
\lesssim_c t^{\frac{1}{4}} |v|^{-\frac{3}{8}} ( t^{\frac{1}{2}} |v|^{\frac{1}{4}} )^{-c}
\end{align*}
for any $c \ge 0$.
\end{lem}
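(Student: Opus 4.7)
The proof is by Plancherel combined with a non-stationary phase estimate on the Fourier transform. I first observe that both functions share a common form: in the rescaled spatial variable $y := (x-vt)/(t^{1/2}|v|^{3/4})$, they can be written as $f(t,x) = g(y)\,e^{i\phi(t,x)}$ with $g$ smooth, compactly supported in $y$, satisfying $|g|\lesssim |v|^{-3/4}$ together with $y$-derivatives of the same order. For $\Psi_v$ this is immediate with $g(y) = |v|^{-3/4}\chi(y)$; for $e^{i\phi}\partial_x\wt\chi$ one differentiates the two terms defining $\wt\chi$ and uses that $|x|\sim |v|t$ on $\supp\chi$ to absorb the factors $|x|^{1/2}$, $|x|^{3/2}$ into bounded smooth functions of $y$. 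It thus suffices to analyze a generic such $f$.

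By Plancherel,
\[
\|(1-P^+_{N_v/2^\delta\le\cdot\le 2^\delta N_v})f\|_{L^2}^2 = \int_I |\widehat f(\xi)|^2\,d\xi,\quad I := (-\infty,0] \cup (0, N_v/2^\delta) \cup (2^\delta N_v,\infty).
\]
Changing variable $x = vt + t^{1/2}|v|^{3/4}y$ in $\widehat f$ gives
\[
\widehat f(\xi) = t^{1/2}|v|^{3/4}\int_\R g(y)\,e^{i\Phi(y,\xi)}\,dy,\quad \Phi(y,\xi) := \phi(t,x(y))-x(y)\xi,
\]
with $\partial_y\Phi = t^{1/2}|v|^{3/4}(\sqrt{t/|x|}-\xi)$. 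Because $x\sim vt$ on $\supp g$, we have $\sqrt{t/|x|}\sim \xi_v \sim N_v$, from which one verifies in each of the three regimes of $\xi \in I$ that
\[
|\partial_y\Phi(y,\xi)|\gtrsim t^{1/2}|v|^{3/4}(N_v+|\xi|),\quad y\in\supp g,
\]
while the higher derivatives $\partial_y^k \Phi = (t^{1/2}|v|^{3/4})^k\partial_x^k\phi$ are $O((t^{1/2}|v|^{1/4})^{2-k})$ for $k\ge 2$, hence uniformly bounded on $\Omega_2(t)$.

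Iterating integration by parts $c$ times with the operator $L = (i\partial_y\Phi)^{-1}\partial_y$ then yields the pointwise bound $|\widehat f(\xi)|\lesssim_c t^{1/2}\bigl(t^{1/2}|v|^{3/4}(N_v+|\xi|)\bigr)^{-c}$. Squaring and integrating over $\xi\in I$ (and using $N_v\sim |v|^{-1/2}$) produces
\[
\|(1-P^+_{N_v/2^\delta\le\cdot\le 2^\delta N_v})f\|_{L^2}^2 \lesssim t|v|^{-1/2}(t^{1/2}|v|^{1/4})^{-2c}.
\]
Taking the square root and using $t^{1/2}|v|^{-1/4} = t^{1/4}|v|^{-3/8}\cdot(t^{1/2}|v|^{1/4})^{1/2}$ gives the stated bound with $c-1/2$ in place of $c$; since $c$ is arbitrary this is equivalent after relabeling (the case $c=0$ being the trivial bound $\|\Psi_v\|_{L^2}\lesssim t^{1/4}|v|^{-3/8}$). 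The main technical point is the lower bound on $|\partial_y\Phi|$: the case $\xi<0$ is the most robust, as the two contributions $\sqrt{t/|x|}$ and $-\xi$ add constructively; the two positive regimes $0<\xi<N_v/2^\delta$ and $\xi>2^\delta N_v$ require exploiting the definite separation built into the cutoff parameter $\delta$, together with the facts that $N_v$ lies within $2^\delta$ of $\xi_v$ and $x$ within a fixed fraction of $vt$, both of which hold because $v\in\Omega_2(t)$ guarantees $t^{1/2}|v|^{1/4}\ge 1$.
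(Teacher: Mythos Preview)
Your approach is sound and close in spirit to the paper's, but the execution differs. The paper does not run a non-stationary phase argument on $\Phi(y,\xi)=\phi(t,x(y))-x(y)\xi$; instead it Taylor-expands $\phi$ about $x=vt$ to third order, which turns the oscillatory integral into an exact rescaled Fourier transform:
\[
\widehat\Psi_v(t,\xi)=t^{1/2}\,\chi_1\!\bigl(t^{1/2}|v|^{3/4}(\xi-\xi_v),\,a\bigr),\qquad a:=t^{1/2}|v|^{1/4},
\]
with $\chi_1(\cdot,a)\in\mathcal S$ uniformly for $a\ge 1$. The bound then follows from $|\xi-\xi_v|\gtrsim_\delta\xi_v$ on $I$ (using that $N_v$ is the \emph{nearest} dyadic to $\xi_v$), together with the Schwartz decay. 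The same is done for $e^{i\phi}\partial_x\wt\chi$ after writing $\wt\chi(t,x)=t^{1/2}\wt\chi_0(y,a)$.

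Your integration-by-parts route works, but the claimed lower bound $|\partial_y\Phi|\gtrsim t^{1/2}|v|^{3/4}(N_v+|\xi|)$ is not quite true as stated. On $\supp\chi$ one has $\sqrt{t/|x|}=\xi_v/\sqrt{1-y/a}$, which ranges over an interval of width $\sim\xi_v/a$ about $\xi_v$; the gap between $\xi_v$ and the nearer endpoint of $[N_v/2^\delta,2^\delta N_v]$ is only $\sim\delta\,\xi_v$. When $a$ is close to $1$ the spread can meet or exceed this gap, so for $\xi$ just outside $[N_v/2^\delta,2^\delta N_v]$ and suitable $y$ one gets $\sqrt{t/|x|}-\xi$ arbitrarily small, and your lower bound fails. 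The remedy is immediate: for $a\le C_\delta$ the desired estimate reduces to the trivial $L^2$ bound you already mention, while for $a\ge C_\delta$ the spread $O(\xi_v/a)$ is $\ll\delta\,\xi_v$ and your non-stationary phase argument goes through verbatim. The paper's Taylor expansion avoids this split because it makes $\xi-\xi_v$ (not $\sqrt{t/|x|}-\xi$) the controlling quantity, and $|\xi-\xi_v|\gtrsim_\delta\xi_v$ holds on $I$ uniformly in $a\ge 1$.
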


\begin{proof}
From Taylor's theorem, we can write
\begin{align*}
\phi (t,x)
& = \phi (t,vt) + \partial_x \phi (t,vt) (x-vt) + \frac{1}{2} \partial_x^2 \phi (t,vt) (x-vt)^2 \\
& \qquad + \int_{vt}^x \frac{(x-y)^2}{2} \partial_x^3 \phi (t,y) dy \\
& = -2t \sqrt{|v|} + \frac{1}{\sqrt{|v|}} (x-vt) + \frac{1}{4 t |v|^{\frac{3}{2}}} (x-vt)^2 + R \left( \frac{x-vt}{t^{\frac{1}{2}} |v|^{\frac{3}{4}}}, t^{\frac{1}{2}} |v|^{\frac{1}{4}} \right) ,
\end{align*}
where
\[
R(x,a) := \frac{3}{8} \frac{x^3}{a} \int _0^1 \frac{(1-\theta)^2}{(-\theta \frac{x}{a}+1)^{\frac{5}{2}}} d\theta .
\]
We note that $R (x,a)$ is well-defined provided that $\max (x,0) < a$.
Changing the variable $y= \frac{x-vt}{t^{\frac{1}{2}} |v|^{\frac{3}{4}}}$, we have
\begin{align*}
& \mathcal{F} [\Psi_v] (t,\xi) \\
& = \frac{1}{\sqrt{2\pi}} \int_{\R} e^{-ix\xi} |v|^{-\frac{3}{4}} \chi \left( \frac{x-vt}{t^{\frac{1}{2}} |v|^{\frac{3}{4}}} \right) e^{i \phi (t,x)} dx \\
& = \frac{1}{\sqrt{2\pi}} t^{\frac{1}{2}} e^{i (-2t\sqrt{|v|} + t |v| \xi)} \int _{\R} e^{-i y t^{\frac{1}{2}} |v|^{\frac{3}{4}} (\xi - \xi_v)} \chi (y) e^{\frac{i}{4} y^2 + i R(y, t^{\frac{1}{2}} |v|^{\frac{1}{4}})} dy  \\
& = t^{\frac{1}{2}} \chi_1 \left( t^{\frac{1}{2}} |v|^{\frac{3}{4}} (\xi - \xi_v), t^{\frac{1}{2}} |v|^{\frac{1}{4}} \right) ,
\end{align*}
where
\[
\chi_1 (\xi, a) := e^{i(a\xi-a^2)} \mathcal{F} [e^{\frac{i}{4} x^2 + i R(x, a)} \chi ] (\xi) .
\]
By definition, $\chi_1 (\cdot ,a ) \in \mathcal{S}(\R)$ for $a \ge 1$.
From
\[
\| (\cdot - \xi_v)^c t^{\frac{1}{2}} \chi_1 \left( t^{\frac{1}{2}} |v|^{\frac{3}{4}} (\cdot - \xi_v), t^{\frac{1}{2}} |v|^{\frac{1}{4}} \right) \|_{L^2_{\xi}}
\lesssim_c t^{\frac{1}{4}} |v|^{-\frac{3}{8}} (t^{\frac{1}{2}} |v|^{\frac{3}{4}})^{-c} \| \chi \|_{L^2_x}
\]
and $|\xi - \xi_v| \ge (1-2^{-\delta}) \xi_v$, provided that $\xi \notin [ \frac{N_v}{2^{2\delta}}, 2^{2\delta} N_v]$, we obtain the $L^2$ bound.

Next, we focus on the estimate for $\partial_x \wt{\chi}$.
Setting
\[
\wt{\chi}_0 (x,a) := \frac{x}{2} \chi (x) - i a^{-\frac{3}{2}} |x-a|^{\frac{3}{2}} \chi' (x),
\]
we can write
\[
\wt{\chi}(t,x) = t^{\frac{1}{2}} \wt{\chi}_0 \left( \frac{x-vt}{t^{\frac{1}{2}} |v|^{\frac{3}{4}}}, t^{\frac{1}{2}} |v|^{\frac{1}{4}} \right) .
\]
Here, $\wt{\chi}_0 ( \cdot , a) \in \mathcal{S}(\R)$ for $a \ge 1$.
The calculation used for $\mathcal{F}[ \Psi_v]$ yields
\[
\mathcal{F}[e^{i\phi} (\partial_x \wt{\chi})](t,\xi)
= t^{\frac{1}{2}} \wt{\chi}_1 \left( t^{\frac{1}{2}} |v|^{\frac{3}{4}} (\xi - \xi_v), t^{\frac{1}{2}} |v|^{\frac{1}{4}} \right) ,
\]
where
\[
\wt{\chi}_1 (\xi, a) := e^{i(a\xi-a^2)} \mathcal{F} [e^{\frac{i}{4} x^2 + i R(x, a)} \partial_x \wt{\chi}_0 ( \cdot , a) ] (\xi) .
\]
This gives the desired bound, as above.
\end{proof}

For $v \in \R_-$, we define
\[
\gamma (t,v) := \int_{\R} u (t,x) \overline{\Psi}_v (t,x) dx.
\]
Because $\ue$ and $\uhn$ are essentially frequency localized away from $\xi_v$, we can replace $u$ on the right hand side with $\uhp$.
Indeed, by applying H\"{o}lder's inequality, Lemmas \ref{lem:freq_spat_loc} and \ref{lem:freq_psi}, and Proposition \ref{prop:est|u|}, we obtain
\begin{equation} \label{gamma_uhp}
\begin{aligned}
& \left| \gamma (t,v) - \int_{\R} \uhp (t,x) \overline{\Psi}_v(t,x) dx \right| \\
& \le \| u (t) \|_{L^2} \| (1-P_{\frac{N_v}{2^{\delta}} \le \cdot \le 2^{\delta}N_v}^+) \Psi_v(t) \|_{L^2} + \| \ue (t) \overline{\Psi_v} (t) \|_{L^1} \\
& \quad + \| P_{\frac{N_v}{2^{\delta}} \le \cdot \le 2^{\delta}N_v}^+ \overline{\uhp_N} (t) \|_{L^{\infty}} \| \Psi_{v}(t) \|_{L^1} \\
& \lesssim t^{-\frac{1}{4}} |v|^{-\frac{5}{8}} \| u (t) \|_{L^2} + t^{-\frac{2s-1}{2s+2}+\frac{1}{2}} \left( 1+ \log t \right) \| u (t) \|_{X^s}
+ t^{-\frac{1}{2}} \sum _{\substack{N \in 2^{\delta \mathbb{Z}} \\ N \sim N_v}} N^{\frac{3}{2}} \| u_N (t) \|_{L^2} \\
& \lesssim \left( t^{-\frac{1}{4}} |v|^{-\frac{5}{8}} + t^{-\frac{2s-1}{2s+2}+\frac{1}{2}} \left( 1+ \log t \right) \right) \| u (t) \|_{X^s} ,
\end{aligned}
\end{equation}
provided that $s>\frac{5}{2}$, $t \ge 1$, and $v \in \Omega_2(t)$.

\begin{prop} \label{prop:approx}
For $t \ge 1$,
\[
\| \gamma (t) \|_{L^{\infty}_{v} (\R_-)} \lesssim t^{\frac{1}{2}} \| u (t) \|_{L^{\infty}_x} .
\]
Let $s > \frac{5}{2}$ and $0< \alpha < \frac{2}{2s-1}$.
For $t \ge 1$ and $v \in \Omega_{\alpha} (t)$, we have the bounds
\begin{align*}
& | u(t,vt) - 2 t^{-\frac{1}{2}} \Re \{ e^{i\phi (t,vt)} \gamma (t,v) \} | \lesssim \left\{ t^{-\frac{3}{4}} |v|^{-\frac{5}{8}} + t^{-\frac{2s-1}{2s+2}} (1+ \log t) \right\} \| u (t) \|_{X^s} , \\
&
\begin{aligned}
& | u_x (t,vt) - 2 t^{-\frac{1}{2}} |v|^{-\frac{1}{2}} \Re \{ i e^{i\phi (t,vt)} \gamma (t,v) \} | \\
& \hspace*{140pt} \lesssim \left\{ t^{-\frac{3}{4}} |v|^{-\frac{9}{8}} + t^{-\frac{2s-3}{2s+2}} (1+ \log t) \right\} \| u (t) \|_{X^s} .
\end{aligned}
\end{align*}
\end{prop}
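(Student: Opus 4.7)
The plan is to prove the three estimates in sequence. Part~1 follows immediately from H\"older. Part~2 is the main content and combines \eqref{gamma_uhp} with a change of variables centered at the ray $x=vt$ together with the factorization $\sqrt{|x|}\,\partial_x(e^{-i\phi}\uhp) = e^{-i\phi} J_+\partial_x \uhp$ (valid for $x<0$), which quantifies the variation of $e^{-i\phi}\uhp$ across the wave packet. Part~3 is reduced to Part~2 by a direct pointwise decomposition of $\uhp_x$; the leftover term $\frac{1}{\sqrt{|x|}}J_+\partial_x\uhp$ evaluated at $x=vt$ will be the main analytic difficulty.

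For Part~1, H\"older's inequality gives $|\gamma(t,v)|\le \|u(t)\|_{L^\infty}\|\Psi_v(t)\|_{L^1}$; the substitution $y=(x-vt)/(t^{1/2}|v|^{3/4})$ yields $\|\Psi_v(t)\|_{L^1}\lesssim t^{1/2}$.

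For Part~2, write $u = 2\Re\uhp + 2\Re\uep$, so that $|\uep(t,vt)|$ is absorbed into the stated remainder by Proposition~\ref{prop:est|u|}. By \eqref{gamma_uhp}, the cost of replacing $\gamma(t,v)$ by $\int \uhp\overline{\Psi}_v\,dx$, once multiplied by $t^{-1/2}$, fits within the stated bound. Setting $b(t,x) := e^{-i\phi(t,x)}\uhp(t,x)$ and changing variables gives
\[
\int \uhp(t,x)\overline{\Psi}_v(t,x)\,dx = t^{1/2}\int b(t,\,vt+t^{1/2}|v|^{3/4}y)\,\chi(y)\,dy .
\]
Since $\int\chi\,dy=1$, the difference from $t^{1/2}e^{-i\phi(t,vt)}\uhp(t,vt)$ equals $t^{1/2}\int[b(t,vt+t^{1/2}|v|^{3/4}y)-b(t,vt)]\chi(y)\,dy$. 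Using $\sqrt{|x|}\,b_x = e^{-i\phi}J_+\partial_x\uhp$ together with Cauchy--Schwarz,
\[
|b(t,vt+h)-b(t,vt)| \le \Big(\int_{vt}^{vt+h}\frac{dz}{z^2}\Big)^{1/2}\|\sqrt{|x|}\,J_+\partial_x\uhp(t)\|_{L^2};
\]
for $v\in\Omega_\alpha(t)$ with $\alpha<2/(2s-1)$, we have $|z|\sim|vt|$ throughout the integration range, and Corollary~\ref{cor:uh} bounds the right-hand side by $t^{-3/4}|v|^{-5/8}\|u(t)\|_{X^s}$ uniformly in $|h|\le t^{1/2}|v|^{3/4}$. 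Assembling the pieces yields Part~2.

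For Part~3, the identities $\uhp = e^{i\phi}b$ and $\sqrt{|x|}\,b_x = e^{-i\phi}J_+\partial_x\uhp$ give, for $x<0$,
\[
\uhp_x(t,x) = i\phi_x(t,x)\uhp(t,x) + \frac{1}{\sqrt{|x|}}\,J_+\partial_x\uhp(t,x),
\]
so that at $x=vt$, $\uhp_x(t,vt)=i|v|^{-1/2}\uhp(t,vt)+t^{-1/2}|v|^{-1/2}J_+\partial_x\uhp(t,vt)$. The first summand, after multiplying Part~2 by $|v|^{-1/2}$, produces the main term $2t^{-1/2}|v|^{-1/2}\Re\{ie^{i\phi(t,vt)}\gamma(t,v)\}$ and an error of size $|v|^{-1/2}\{t^{-3/4}|v|^{-5/8}+t^{-(2s-1)/(2s+2)}(1+\log t)\}\|u(t)\|_{X^s}$; under $\alpha<2/(2s-1)$ together with $s>5/2$, we have $|v|^{-1/2}\le t^{1/(s+1)}$, so this is absorbed into $(t^{-3/4}|v|^{-9/8}+t^{-(2s-3)/(2s+2)}(1+\log t))\|u(t)\|_{X^s}$. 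The elliptic contribution $|\uep_x(t,vt)|$ is handled by Proposition~\ref{prop:est|u|}. The \emph{main obstacle} is the pointwise quantity $t^{-1/2}|v|^{-1/2}|J_+\partial_x\uhp(t,vt)|$: the plan is a localized Gagliardo--Nirenberg inequality on an interval $I\subset\{|x|\sim|vt|\}$, combining the two weighted $L^2$ bounds of Corollary~\ref{cor:uh} with $\|\uhp_x(t)\|_{L^2}\lesssim\|u(t)\|_{X^s}$, and optimizing the length of $I$, to obtain $|J_+\partial_x\uhp(t,vt)|\lesssim t^{-1/2}|v|^{-3/4}\|u(t)\|_{X^s}$. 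This yields $t^{-1}|v|^{-5/4}\|u(t)\|_{X^s}$, absorbed by $t^{-3/4}|v|^{-9/8}\|u(t)\|_{X^s}$ whenever $|v|\ge t^{-2}$, hence on $\Omega_\alpha(t)$ for any $\alpha<2$.
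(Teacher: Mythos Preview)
Your arguments for Part~1 and Part~2 are correct and coincide with the paper's proof almost line for line: the paper also passes to $\whp = e^{-i\phi}\uhp$, uses the change of variables centered at $vt$, and bounds the variation of $\whp$ across the packet via $\sqrt{|x|}\,\partial_x\whp = e^{-i\phi}J_+\partial_x\uhp$ together with Corollary~\ref{cor:uh}.

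For Part~3 your route genuinely differs from the paper's. The paper integrates by parts in $\int \uhp\,\overline{\Psi}_v\,dx$ (using $e^{-i\phi} = i\sqrt{|x|/t}\,\partial_x(e^{-i\phi}) + \text{l.o.t.}$) to obtain, up to admissible errors,
\[
\gamma(t,v) \approx -i|v|^{-1/4}\int \uhp_x(t,x)\,\chi\Big(\tfrac{x-vt}{t^{1/2}|v|^{3/4}}\Big)e^{-i\phi(t,x)}\,dx,
\]
and then \emph{reruns} the Part~2 argument with $\uhp$ replaced by $\uhp_x$, this time invoking the second bound $\|x\,J_+\partial_x\uhp_x\|_{L^2}\lesssim t^{1/2}\|u\|_{X^s}$ of Corollary~\ref{cor:uh}. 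Your approach instead splits $\uhp_x(t,vt)$ pointwise and controls $t^{-1/2}|v|^{-1/2}|J_+\partial_x\uhp(t,vt)|$ by a localized Gagliardo--Nirenberg inequality on $I\subset\{|x|\sim|vt|\}$. Both methods ultimately rest on the same two weighted $L^2$ bounds of Corollary~\ref{cor:uh}; the paper's is a bit more symmetric with Part~2 and avoids any pointwise interpolation, while yours is more direct and sidesteps the integration by parts and the resulting lower-order cross terms.

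One small correction: with the ingredients you list (the two bounds of Corollary~\ref{cor:uh} and the \emph{global} bound $\|\uhp_x\|_{L^2}\lesssim\|u\|_{X^s}$), the localized interpolation gives $|J_+\partial_x\uhp(t,vt)|\lesssim t^{-1/2}|v|^{-3/4}\|u\|_{X^s}$ only for $|v|\le 1$; for $|v|\ge 1$ the commutator term $\tfrac{1}{2\sqrt{|x|}}\partial_x\uhp$ in $\partial_x(J_+\partial_x\uhp)$ limits you to $t^{-1/2}|v|^{-1/2}\|u\|_{X^s}$. This still yields $t^{-1}|v|^{-1}\|u\|_{X^s}\le t^{-3/4}|v|^{-9/8}\|u\|_{X^s}$ on $\Omega_\alpha(t)$, so your conclusion stands. (If you want the bound $|v|^{-3/4}$ uniformly, use that on $I$ only $N\sim N_v\sim |v|^{-1/2}$ contribute to $\uhp$, so $\|\partial_x\uhp\|_{L^2(I)}\lesssim |v|^{-1/2}\|u\|_{L^2}$.)
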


\begin{proof}
By direct calculation,
\[
\| \gamma (t) \|_{L^{\infty}_v}
\le t^{\frac{1}{2}} \| u (t) \|_{L^{\infty}_x} \int_{\R} | \chi (x) | dx
\sim t^{\frac{1}{2}} \| u (t) \|_{L^{\infty}_x}.
\]

We set $\whp (t,x) := e^{-i\phi (t,x)} \uhp (t,x)$.
As $u = 2 \Re u^+$, from \eqref{gamma_uhp} and Proposition \ref{prop:est|u|}, we have
\begin{align*}
& u(t,vt) - 2 t^{-\frac{1}{2}} \Re \{ e^{i\phi (t,vt)} \gamma (t,v) \} \\
& = 2 \Re \left[ e^{i\phi (t,vt)} \left\{ \whp(t,vt) - t^{-\frac{1}{2}} |v|^{-\frac{3}{4}} \int_{\R} \whp(t,x) \chi \left( \frac{x-vt}{t^{\frac{1}{2}} |v|^{\frac{3}{4}}} \right) dx \right\} \right] \\
& \qquad + O \left( \left( t^{-\frac{3}{4}} |v|^{-\frac{5}{8}} + t^{-\frac{2s-1}{2s+2}} (1+ \log t) \right) \| u (t) \|_{X^s} \right) .
\end{align*}
Here by changing the variable $z = \frac{x-vt}{t}$,
\begin{align*}
& \left| \whp(t,vt) - t^{-\frac{1}{2}} |v|^{-\frac{3}{4}} \int_{\R} \whp(t,x) \chi \left( \frac{x-vt}{t^{\frac{1}{2}} |v|^{\frac{3}{4}}} \right) dx \right| \\
& = t^{-\frac{1}{2}} |v|^{-\frac{3}{4}} \left| \int_{\R} \left\{ \whp(t,vt) - \whp(t,x) \right\} \chi \left( \frac{x-vt}{t^{\frac{1}{2}} |v|^{\frac{3}{4}}} \right) dx \right| \\
& = t^{\frac{1}{2}} |v|^{-\frac{3}{4}} \left| \int_{\R} \left\{ \whp(t,vt) - \whp(t, t(z+v)) \right\} \chi (t^{\frac{1}{2}} |v|^{-\frac{3}{4}} z) dz \right| .
\end{align*}
From $|z| \le (1-2^{-\delta}) t^{-\frac{1}{2}} |v|^{\frac{3}{4}}$, $v \in \Omega_2 (t)$, and Corollary \ref{cor:uh},
\begin{equation} \label{eq:diffw}
\begin{aligned}
& |\whp (t,vt) - \whp (t, t(z+v))| \\
& = |tz| \left| \int_0^1 \partial_x \whp (t, vt+(1-\theta)tz) d\theta \right| \\
& \lesssim |tz|^{\frac{1}{2}} \| \partial_x \whp (t) \|_{L^2([\frac{t|v|}{2^{\delta}}, 2^{\delta}t|v|])} \\
& \sim |z|^{\frac{1}{2}} t^{-\frac{1}{2}} |v|^{-1} \left\| \sqrt{|x|} J_+ \partial_x \uhp (t) \right\|_{L^2([\frac{t|v|}{2^{\delta}}, 2^{\delta}t|v|])} \\
& \lesssim |z|^{\frac{1}{2}} t^{-\frac{1}{2}} |v|^{-1} \| u(t) \|_{X^s} .
\end{aligned}
\end{equation}
Therefore, we obtain
\begin{align*}
& \left| \whp (t,vt) - t^{-\frac{1}{2}} |v|^{-\frac{3}{4}} \int_{\R} \whp (t,x) \chi \left( \frac{x-vt}{t^{\frac{1}{2}} |v|^{\frac{3}{4}}} \right) dx \right| \\
& \lesssim |v|^{-\frac{7}{4}} \int_{\R} |z|^{\frac{1}{2}} |\chi (t^{\frac{1}{2}} |v|^{-\frac{3}{4}} z)| dz \| u (t) \|_{X^s} \\
& \lesssim t^{-\frac{3}{4}} |v|^{-\frac{5}{8}} \| u (t) \|_{X^s} .
\end{align*}

Next, we show the approximation estimate of $u_x$.
By applying integration by parts and Proposition \ref{prop:est|u|}, we have
\begin{align*}
& \int_{\R} \uhp (t,x) \overline{\Psi}_v(t,x) dx \\
&= -i t^{-\frac{1}{2}} |v|^{-\frac{3}{4}} \int_{\R} \sqrt{|x|} \; \uhp_x (t,x) \chi \left( \frac{x-vt}{t^{\frac{1}{2}} |v|^{\frac{3}{4}}} \right) e^{-i \phi (t,x)} dx \\
& \quad -i t^{-1} |v|^{-\frac{3}{2}} \int_{\R} \sqrt{|x|} \; \uhp (t,x) \chi' \left( \frac{x-vt}{t^{\frac{1}{2}} |v|^{\frac{3}{4}}} \right) e^{-i \phi (t,x)} dx \\
& \quad + \frac{1}{2} i t^{-\frac{1}{2}} |v|^{-\frac{3}{4}} \int_{\R} \frac{1}{\sqrt{|x|}} \uhp (t,x) \chi \left( \frac{x-vt}{t^{\frac{1}{2}} |v|^{\frac{3}{4}}} \right) e^{-i \phi (t,x)} dx \\
& = -i |v|^{-\frac{1}{4}} \int_{\R} \uhp_x (t,x) \chi \left( \frac{x-vt}{t^{\frac{1}{2}} |v|^{\frac{3}{4}}} \right) e^{-i \phi (t,x)} dx \\
& \quad + O \left( t^{-\frac{1}{2}} |v|^{\frac{1}{4}} \min ( |v|^{\frac{s}{4}-1}, |v|^{-\frac{5}{4}}) \| u (t) \|_{X^s} \right) .
\end{align*}
Hence, from \eqref{gamma_uhp}, $0< \alpha < \min ( \frac{2}{3}, \frac{2}{2s-1})$, and Proposition \ref{prop:est|u|}, we can write
\begin{align*}
& u_x (t,vt) - 2 t^{-\frac{1}{2}} |v|^{-\frac{1}{2}} \Re \{ i e^{i\phi (t,vt)} \gamma (t,v) \} \\
& = 2 \Re \left\{ \uhp_x (t,vt) - t^{-\frac{1}{2}} |v|^{-\frac{3}{4}} e^{i\phi (t,vt)} \int_{\R} \uhp_x (t,x) \chi \left( \frac{x-vt}{t^{\frac{1}{2}} |v|^{\frac{3}{4}}} \right) e^{-i\phi (t,x)} dx \right\} \\
& \quad + O \left( \left( t^{-\frac{3}{4}} |v|^{-\frac{9}{8}} + t^{-\frac{2s-3}{2s+2}} (1+ \log t) \right) \| u (t) \|_{X^s} \right) .
\end{align*}
For $|z| \le (1-2^{-\delta}) t^{-\frac{1}{2}} |v|^{\frac{3}{4}}$, $v \in \Omega_2(t)$, and by Corollary \ref{cor:uh},
\begin{align*}
\| |x|^{-\frac{1}{2}} J_+ \partial_x \uhp_x (t) \|_{L^2([\frac{t|v|}{2^{\delta}}, 2^{\delta}t|v|])}
& \sim (t|v|)^{-\frac{3}{2}} \| |x| J_+ \partial_x \uhp_x (t) \|_{L^2([\frac{t|v|}{2^{\delta}}, 2^{\delta}t|v|])} \\
& \lesssim t^{-1} |v|^{-\frac{3}{2}} \| u(t) \|_{X^s}.
\end{align*}
By the argument given above, we obtain the desired bound.
\end{proof}

\begin{prop} \label{prop:gamma_decay}
Let $s>4$ and $0< \alpha < \min \left\{ \frac{2}{45}, \frac{2}{s+1}, \frac{2(s-4)}{3(s+1)} \right\}$.
If $u$ solves \eqref{sp}, then, for $t \ge 1$ and $v \in \Omega_{\alpha} (t)$, we have
\[
\dot{\gamma}(t,v) = 3it^{-1} |v|^{-\frac{1}{2}} |\gamma (t,v)|^2 \gamma (t,v) + O\left( t^{-\frac{6}{5}} (\| u (t) \|_{X^s} + \| u (t) \|_{X^s}^3) \right) .
\]
\end{prop}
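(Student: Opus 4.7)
The plan is to apply the testing-by-wave-packet method: differentiate $\gamma(t,v)=\int u\,\overline{\Psi_v}\,dx$ in $t$, substitute the equation $\partial_t u = \partial_x^{-1}u + \partial_x(u^3)$, and use the skew-adjointness of $\partial_x^{-1}$ to move the linear part from $u$ onto $\Psi_v$. This produces the identity
\begin{equation*}
\dot{\gamma}(t,v) = \int_{\R} u(t,x)\,\overline{L\Psi_v(t,x)}\,dx + \int_{\R} \partial_x(u(t,x)^3)\,\overline{\Psi_v(t,x)}\,dx,
\end{equation*}
so that the two integrals may be analyzed separately: the first is a linear ``error'' reflecting that $\Psi_v$ is only an approximate solution of $L\Psi_v=0$, while the second will produce the resonant cubic together with non-resonant remainders.

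For the linear term, I would use the decomposition \eqref{eq:LPsi}. The tail $it^{-2}|v|^{-3/2}\partial_x^{-1}(\partial_x^2\{|x|^{3/2}\chi'(\cdots)\}e^{i\phi})$ is frequency-localized near $|v|^{-1/2}$ by a variant of Lemma \ref{lem:freq_psi}, and its $L^2$-pairing with $u$ is easily absorbed into $O(t^{-6/5})$ for $\alpha$ small. For the main piece $t^{-1}(\partial_x\wt{\chi})e^{i\phi}$, I would split $u = \uhp + \uep + \overline{\uhp+\uep}$: the $\uep$ contributions are controlled by the elliptic decay in Proposition \ref{prop:est|u|}, while the $\overline{\uhp}$ contribution is oscillatory with phase $e^{-2i\phi}$ and is tamed by integration by parts in $x$ exploiting $\partial_x\phi\sim|v|^{-1/2}$. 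For the resonant $\uhp$ piece, substituting $\uhp\approx t^{-1/2}e^{i\phi}\gamma(t,v)$ via Proposition \ref{prop:approx} reduces the main contribution to $t^{-3/2}\gamma(t,v)\int_{\R}\overline{\partial_x\wt{\chi}}\,dx$, which vanishes identically because $\wt{\chi}(t,\cdot)$ has compact support.

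For the nonlinear term, I would integrate by parts to obtain $-\int u^3\,\overline{\partial_x\Psi_v}\,dx$. The derivative acts primarily on the phase, with $\partial_x\phi = \sqrt{t/|x|}\sim|v|^{-1/2}$ on the wave-packet support, so up to a negligible $O(t^{-1/2}|v|^{-3/2})$ piece from $\chi'$ one obtains $i\int u^3(\partial_x\phi)\overline{\Psi_v}\,dx$. Replacing $u$ by $2t^{-1/2}\Re\{e^{i\phi}\gamma(t,v)\}$ via Proposition \ref{prop:approx} (propagated across the packet width $t^{1/2}|v|^{3/4}$ by the Lipschitz-type estimate \eqref{eq:diffw} and Corollary \ref{cor:uh}), the cube $u^3$ splits into four terms with phases $e^{\pm i\phi}, e^{\pm 3i\phi}$. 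After pairing with the $e^{-i\phi}$ of $\overline{\Psi_v}$, only the $3e^{i\phi}|\gamma|^2\gamma$ summand is non-oscillatory, and the change of variable $y=(x-vt)/(t^{1/2}|v|^{3/4})$ together with $\int\chi\,dy=1$ produces exactly $3it^{-1}|v|^{-1/2}|\gamma|^2\gamma$. The non-resonant summands with phases $e^{\pm 2i\phi}, e^{\pm 4i\phi}$ are killed by integration by parts, each step gaining an inverse power of $\partial_x\phi\sim|v|^{-1/2}$.

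The main obstacle will be the uniform bookkeeping of error sources: the Proposition \ref{prop:approx} approximation errors for $u$ and $u_x$, the elliptic decay from Proposition \ref{prop:est|u|}, the Lipschitz correction across the wave packet from Corollary \ref{cor:uh}, and the $|v|^{1/2}$-losses from the non-resonant integrations by parts, all have to be combined with the factors $t^{\pm\alpha}$ coming from $v\in\Omega_\alpha(t)$ and with the cubic size $\|u\|_{X^s}^3$. The three constraints $\alpha<2/45$, $\alpha<2/(s+1)$, and $\alpha<2(s-4)/(3(s+1))$ correspond respectively to balancing the non-resonant oscillatory losses, the $u$-approximation error, and the elliptic $\uep$-error against the target $t^{-6/5}$ decay rate.
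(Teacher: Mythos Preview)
Your overall strategy is sound and leads to a correct proof, but it diverges from the paper's argument in how the non-resonant pieces are handled, and a couple of your side remarks are off.

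\textbf{Comparison with the paper.} For the nonlinear term, the paper does \emph{not} substitute $u\approx 2t^{-1/2}\Re\{e^{i\phi}\gamma\}$ and then integrate by parts on the oscillatory phases. Instead it keeps $u$ as $u^{\mathrm{hyp}}$, writes $(u^{\mathrm{hyp}})^3=3|u^{\mathrm{hyp},+}|^2u^{\mathrm{hyp},+}+\text{rest}$, and disposes of the rest by a \emph{frequency} argument: on the packet support $u^{\mathrm{hyp},+}$ is (up to tails controlled by Lemma~\ref{lem:freq_spat_loc}) supported in $[\tfrac{N_v}{\sqrt{3}\,2^{3\delta}},\sqrt{3}\,2^{3\delta}N_v]$, so the non-resonant products $(u^{\mathrm{hyp},+})^3$, $|u^{\mathrm{hyp},+}|^2\overline{u^{\mathrm{hyp},+}}$, $(\overline{u^{\mathrm{hyp},+}})^3$ cannot land in $[\tfrac{N_v}{2^{2\delta}},2^{2\delta}N_v]$ unless at least one factor is the small tail. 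This pairs against the frequency localization of $\Psi_v$ from Lemma~\ref{lem:freq_psi}. Similarly, in the linear term the paper replaces $u$ by $u^{\mathrm{hyp},+}$ using Lemma~\ref{lem:freq_psi} directly (so $\overline{u^{\mathrm{hyp},+}}$ never needs a separate oscillatory-integral treatment), and then controls the remaining $\int e^{-i\phi}u^{\mathrm{hyp},+}\partial_x\overline{\wt{\chi}}$ by one integration by parts and the $J_+$ bound of Corollary~\ref{cor:uh}, rather than by your substitution-plus-$\int\partial_x\wt{\chi}=0$ trick. Your physical-space approach works and is arguably more elementary, but the paper's frequency argument is cleaner and avoids repeated stationary-phase bookkeeping.

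\textbf{Two small corrections.} First, for the tail $it^{-2}|v|^{-3/2}\partial_x^{-1}(\cdots)$ there is no need for frequency localization: the paper simply pairs it with $\partial_x^{-1}u$ in $L^2$ and computes $\|\partial_x^2\{|x|^{3/2}\chi'(\cdots)\}\|_{L^2}\lesssim t^{3/4}|v|^{3/8}$, giving $t^{-5/4}|v|^{-9/8}\|u\|_{X^s}$. This is precisely where the constraint $\alpha<\tfrac{2}{45}$ comes from, not from ``non-resonant oscillatory losses'' as you suggest. Second, the constraints $\alpha<\tfrac{2}{s+1}$ and $\alpha<\tfrac{2(s-4)}{3(s+1)}$ both arise from requiring the elliptic decay of Proposition~\ref{prop:est|u|} to be dominated by the hyperbolic pointwise bound on $\Omega_\alpha(t)$ (for $|v|<1$ and $|v|>1$ respectively), not from separate mechanisms. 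Your phrase ``gaining an inverse power of $\partial_x\phi\sim|v|^{-1/2}$'' is also misleading for $|v|>1$: the genuine gain per integration by parts is $(t^{1/2}|v|^{1/4})^{-1}$, coming from the combination of $(\partial_x\phi)^{-1}$ with the derivative landing on $\chi$.
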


\begin{proof}
By \eqref{eq:LPsi},
\begin{align*}
\dot{\gamma}(t,v)
& = \int_{\R} ( Lu \cdot \overline{\Psi}_v + u L \overline{\Psi}_v ) (t,x) dx \\
& = \int_{\R} \partial_x(u^3) (t,x) \overline{\Psi}_v (t,x) dx + t^{-1} \int_{\R} e^{-i\phi (t,x)} u (t,x) \partial_x \overline{\wt{\chi}} (t,x) dx \\
& \qquad + i t^{-2} |v|^{-\frac{3}{2}} \int_{\R} e^{-i\phi (t,x)} \partial_x^{-1} u (t,x) \partial_x ^2 \left\{ |x|^{\frac{3}{2}} \chi' \left( \frac{x-vt}{t^{\frac{1}{2}} |v|^{\frac{3}{4}}} \right) \right\} dx .
\end{align*}
The calculation used in \eqref{gamma_uhp} yields
\begin{align*}
& \left| \int_{\R} e^{-i\phi (t,x)} u (t,x) \partial_x \overline{\wt{\chi}} (t,x) dx - \int_{\R} e^{-i\phi (t,x)} \uhp (t,x) \partial_x \overline{\wt{\chi}} (t,x) dx \right| \\
& \lesssim \| u_N (t) \|_{L^2} \| (1-P_{\frac{N_v}{2^{\delta}} \le \cdot \le 2^{\delta}N_v}^+) e^{i \phi} \partial_x \wt{x} \|_{L^2} + \| \ue (t) \partial_x \overline{\wt{x}} (t) \|_{L^1} \\
& \quad + \| P_{\frac{N}{2^{\delta}} \le \cdot \le 2^{\delta}N}^+ \overline{\uhp_N} (t) \|_{L^{\infty}} \| \partial_x \wt{x} (t) \|_{L^1} \\
& \lesssim t^{-\frac{3}{4}} |v|^{-\frac{7}{8}} \| u (t) \|_{L^2} + t^{-\frac{2s-1}{2s+2}+\frac{1}{2}} \left( 1+ \log t \right) \| u (t) \|_{X^s} + t^{-\frac{1}{2}} \sum _{\substack{N \in 2^{\delta \mathbb{Z}} \\ N \sim N_v}} N^{\frac{3}{2}} \| u_N (t) \|_{L^2} \\
& \lesssim t^{-\frac{1}{5}} \| u (t) \|_{X^s}
\end{align*}
provided that $s>4$ and $0< \alpha < \frac{2}{5}$.
From Corollary \ref{cor:uh},
\begin{align*}
\left| \int_{\R} \partial_x (e^{-i\phi (t,x)} \uhp (t,x)) \overline{\wt{\chi}} (t,x) dx \right|
& \lesssim t^{-1} |v|^{-1} \| \sqrt{|x|} J_+ \partial_x \uhp (t) \|_{L^2} \| \wt{\chi} (t) \|_{L^2} \\
& \lesssim t^{-\frac{1}{4}} |v|^{-\frac{5}{8}} \| u (t) \|_{X^s} .
\end{align*}
From
\begin{align*}
\partial_x ^2 \left\{ |x|^{\frac{3}{2}} \chi' \left( \frac{x-vt}{t^{\frac{1}{2}} |v|^{\frac{3}{4}}} \right) \right\}
& = \frac{3}{4} \frac{1}{\sqrt{|x|}} \chi' \left( \frac{x-vt}{t^{\frac{1}{2}} |v|^{\frac{3}{4}}} \right) - 3 \frac{\sqrt{|x|}}{t^{\frac{1}{2}} |v|^{\frac{3}{4}}} \chi'' \left( \frac{x-vt}{t^{\frac{1}{2}} |v|^{\frac{3}{4}}} \right) \\
& \quad + \frac{|x|^{\frac{3}{2}}}{t |v|^{\frac{3}{2}}} \chi''' \left( \frac{x-vt}{t^{\frac{1}{2}} |v|^{\frac{3}{4}}} \right)
\end{align*}
we have
\begin{align*}
& \left| \int_{\R} e^{-i\phi (t,x)} \partial_x^{-1} u (t,x) \partial_x ^2 \left\{ |x|^{\frac{3}{2}} \chi' \left( \frac{x-vt}{t^{\frac{1}{2}} |v|^{\frac{3}{4}}} \right) \right\} dx \right| \\
& \le \| \partial_x^{-1} u (t) \|_{L^2} \left\| \partial_x ^2 \left\{ |\cdot |^{\frac{3}{2}} \chi' \left( \frac{\cdot -vt}{t^{\frac{1}{2}} |v|^{\frac{3}{4}}} \right) \right\} \right\|_{L^2}
\lesssim t^{\frac{3}{4}} |v|^{\frac{3}{8}} \| u (t) \|_{X^s} .
\end{align*}
This yields
\begin{equation} \label{eq:dotgamma}
\dot{\gamma}(t,v) = \int_{\R} \partial_x (u^3) (t,x) \overline{\Psi}_v (t,x) dx + O ( t^{-\frac{6}{5}} \| u (t) \|_{X^s} )
\end{equation}
as $s>4$ and $0< \alpha < \frac{2}{45}$.

Because
\[
\partial_x \Psi_v (t,x) =t^{-\frac{1}{2}} |v|^{-\frac{3}{2}} e^{i\phi (t,x)} \chi' \left( \frac{x-vt}{t^{\frac{1}{2}} |v|^{\frac{3}{4}}} \right) + i \sqrt{\frac{t}{|x|}} \Psi_v (t,x)
\]
and Proposition \ref{prop:est|u|} and $0< \alpha < \min \left\{ \frac{2}{s+1}, \frac{2(s-2)}{3(s+1)} \right\}$ yield
\[
|u(t,vt)| \lesssim t^{-\frac{1}{2}} \min ( |v|^{\frac{s}{4}-\frac{1}{2}}, |v|^{-\frac{3}{4}} ) \| u (t) \|_{X^s},
\]
we have
\begin{align*}
& \int_{\R} \partial_x (u^3) (t,x) \overline{\Psi}_v (t,x) dx \\
& = i |v|^{-\frac{1}{2}} \int_{\R} u^3(t,x) \overline{\Psi}_v (t,x) dx + O \left( t^{-\frac{6}{5}} \| u (t) \|_{X^s}^3 \right) \\
& = i |v|^{-\frac{1}{2}} \int_{\R} (\uh)^3 (t,x) \overline{\Psi}_v(t,x) dx + O \left( t^{-\frac{6}{5}} \| u (t) \|_{X^s}^3 \right)
\end{align*}
provided that $s>4$ and $0< \alpha < \min \left\{ \frac{2}{s+1}, \frac{2(s-2)}{3(s+1)} \right\}$.
Here, we observe that for $\frac{|v|}{2^{\delta}} \le \frac{|x|}{t} \le 2^{\delta}|v|$,
\begin{align*}
\uhp
& = \sum_{\substack{N \in 2^{\delta \mathbb{Z}} \\ \frac{N_v}{\sqrt{3} 2^{2\delta}} \le N \le \sqrt{3} 2^{2\delta} N_v}} \uhp_N \\
& = P_{\frac{N_v}{\sqrt{3} 2^{3\delta}} \le \cdot \le \sqrt{3} 2^{3\delta} N_v}^+ \uhp + \sum_{\substack{N \in 2^{\delta \mathbb{Z}} \\ \frac{N_v}{\sqrt{3} 2^{2\delta}} \le N \le \sqrt{3} 2^{2\delta} N_v}} (1-P_{\frac{N}{2^{\delta}} \le \cdot \le 2^{\delta} N}^+) \uhp_N .
\end{align*}
If the frequency support of
\[
(\uh)^3 - 3 |\uhp|^2 \uhp = (\uhp)^3 + 3 | \uhp|^2 \overline{\uhp} + \overline{\uhp}^3,
\]
is contained in $[\frac{N_v}{2^{2\delta}}, 2^{2\delta} N_v]$, then at least one of $\uhp$ on the right hand side is $(1-P_{\frac{N}{2^{\delta}} \le \cdot \le 2^{\delta} N}^+) \uhp_N$.
Accordingly, for $s>4$ and $0< \alpha < \frac{2}{5}$, Lemmas \ref{lem:freq_spat_loc} and \ref{lem:freq_psi} and Proposition \ref{prop:est|u|} imply
\begin{align*}
& \left| \int_{\R} (\uh)^3 (t,x) \overline{\Psi}_v(t,x) dx - 3 \int_{\R} (|\uhp|^2 \uhp) (t,x) \overline{\Psi}_v(t,x) dx \right| \\
& \lesssim t^{-1} \min (|v|^{\frac{s}{2}-1}, |v|^{-\frac{3}{2}}) \bigg( \| \uhp (t) \|_{L^2} \| (1-P_{\frac{N_v}{2^{\delta}} \le \cdot \le 2^{\delta} N_v}) \Psi_v (t) \|_{L^2} \\
& \quad + \sum_{\substack{N \in 2^{\delta \mathbb{Z}} \\ N \sim N_v}} \| (1-P_{\frac{N}{2^{\delta}} \le \cdot \le 2^{\delta} N}^+) \uhp_N (t) \|_{L^{\infty}} \| \Psi _v (t) \|_{L^{1}} \bigg) \| u(t) \|_{X^s}^2 \\
& \lesssim t^{-1} \min (|v|^{\frac{s}{2}-1}, |v|^{-\frac{3}{2}}) \left( t^{-\frac{1}{4}} |v|^{-\frac{5}{8}} + t^{-\frac{1}{2}} |v|^{-\frac{3}{4}} \right) \| u (t) \|_{X^s}^3 \\
& \lesssim t^{-\frac{6}{5}} |v|^{\frac{1}{2}} \| u (t) \|_{X^s}^3 .
\end{align*}
Moreover, \eqref{eq:diffw} and $0 < \alpha < \frac{2}{15}$ yield
\begin{align*}
& \left| \int_{\R} \uhp (t,x) (|\uhp (t,x)|^2-|\uhp (t,vt)|^2) \overline{\Psi}_v (t,x) dx \right| \\
& \lesssim t^{-1} |v|^{-\frac{3}{4}} \min ( |v|^{\frac{s}{2}-1} , |v|^{-\frac{3}{2}}) \| u (t) \|_{X^s}^2 \\
& \quad \times \int_{\R} \left| \chi \left( \frac{x-vt}{t^{\frac{1}{2}} |v|^{\frac{3}{4}}} \right) \right| |\whp (t,x) - \whp (t,vt)| dx \\
& \lesssim \| u (t) \|_{X^s}^2 \int_{\R} \left| \chi (t^{\frac{1}{2}} |v|^{-\frac{3}{4}} z) \right| |\whp (t,t(z+v)) - \whp (t,vt)| dz \\
& \lesssim t^{-\frac{1}{2}} |v|^{-1} \| u (t) \|_{X^s}^3 \int_{\R} |z|^{\frac{1}{2}} \left| \chi (t^{\frac{1}{2}} |v|^{-\frac{3}{4}} z) \right| dz \\
& \lesssim t^{-\frac{5}{4}} |v|^{\frac{1}{8}} \| u (t) \|_{X^s}^3
\lesssim t^{-\frac{6}{5}} |v|^{\frac{1}{2}} \| u (t) \|_{X^s}^3 .
\end{align*}
Therefore, we have
\begin{align*}
& \int_{\R} \partial_x (u^3) (t,x) \overline{\Psi}_v (t,x) dx \\
& = 3 i |v|^{-\frac{1}{2}} |\uhp (t,vt)|^2 \gamma (t,v) \\
& \quad + 3i |v|^{-\frac{1}{2}} \int_{\R} \uhp (t,x) (|\uhp (t,x)|^2-|\uhp (t,vt)|^2) \overline{\Psi}_v (t,x) dx \\
& \quad + O \left( t^{-\frac{6}{5}} \| u (t) \|_{X^s}^3 \right) \\
& = 3it^{-1} |v|^{-\frac{1}{2}} |\gamma|^2 \gamma + O \left( t^{-\frac{6}{5}} \| u (t) \|_{X^s}^3 \right) .
\end{align*}
Combining this with \eqref{eq:dotgamma}, we obtain the desired bound.
\end{proof}

\section{Proof of  Theorem \ref{thm}} \label{S:proof}

From Corollary \ref{cor:LWP} and Lemma \ref{lem:energy}, the existence of a global solution to \eqref{sp} follows from the a priori bound  \eqref{est:u_infty}.
The case of $0<t<1$ is a consequence of Corollary \ref{cor:LWP} and the smallness of the initial data.
We therefore consider the time interval $[1,T]$, for which we make the bootstrap assumption
\[
|u(t,x)| + |u_x(t,x)|\le D \eps t^{-\frac{1}{2}} .
\]
Here, $D$ is chosen with $1 \ll D \ll \eps^{-\frac{1}{2}}$.
From Lemma \ref{lem:energy} and Proposition \ref{prop:est|u|}, we have
\begin{align} \label{eq:u_decay}
& |u (t,vt)|
\lesssim \eps t^{-\frac{1}{2} + D_{\ast} \eps} \left\{ \min ( |v|^{\frac{s}{4}-\frac{1}{2}}, |v|^{-\frac{3}{4}} ) + t^{-\frac{s-2}{2s+2}} ( 1+ \log t ) \right\} , \\
\notag
& |u_x (t,vt)|
\lesssim \eps t^{-\frac{1}{2} + D_{\ast} \eps} \left\{ \min ( |v|^{\frac{s}{4}-1}, |v|^{-\frac{5}{4}} ) + t^{-\frac{s-4}{2s+2}} ( 1+ \log t ) \right\} .
\end{align}
As in the previous section, we consider
\[
\Omega_{\alpha} (t) := \{ v \in \R_- : t^{-\alpha} \le -v \le t^{\alpha} \} ,
\]
where $\alpha$ is a fixed constant satisfying $0< \alpha < \min \left\{ \frac{2}{45}, \frac{2}{2s+1}, \frac{2(s-4)}{3(s+1)} \right\}$.
Outside $\Omega_{\alpha} (t)$, \eqref{est:u_infty} follows from the above bounds, provided that $s>4$ and $\eps >0$ is sufficiently small.

From Proposition \ref{prop:gamma_decay}, there exists a unique function $W$ defined on $\R_-$ such that for $t \ge 1$ and $v \in \Omega_{\alpha} (t)$,
\begin{equation} \label{gamma_approx}
\gamma (t, v) = W(v) e^{3i |v|^{-\frac{1}{2}} |W(v)|^2 \log t} + O (\eps t^{-\frac{1}{5}+3D_{\ast} \eps}) .
\end{equation}

Inside $\Omega_{\alpha} (t)$, from Proposition \ref{prop:approx}, we have
\begin{align}
& | u(t,vt) - 2 t^{-\frac{1}{2}} \Re \{ e^{i\phi (t,vt)} \gamma (t,v) \} |
\lesssim \eps t^{-\frac{1}{2}+D_{\ast} \eps} \left\{ t^{-\frac{1}{4}+\frac{5}{8}\alpha} + t^{-\frac{s-2}{2s+2}} (1+ \log t) \right\} , \label{eq:byProp4.2} \\
&
\begin{aligned}
& | u_x (t,vt) - 2 t^{-\frac{1}{2}} |v|^{-\frac{1}{2}} \Re \{ i e^{i\phi (t,vt)} \gamma (t,v) \} | \\
& \hspace*{140pt} \lesssim \eps t^{-\frac{1}{2}+D_{\ast} \eps} \left\{ t^{-\frac{1}{4} + \frac{9}{8} \alpha} + t^{-\frac{s-4}{2s+2}} (1+ \log t) \right\} .
\end{aligned} \notag
\end{align}
Therefore, it is sufficient to show that 
\[
| \gamma (t,v) | \lesssim \eps (1+|v|^{-\frac{1}{2}})^{-1}
\]
for any $v \in \Omega_{\alpha} (t)$.
The crucial point here is that the implicit constant does not depend on $D$.

For $|v| \sim 1$, by \eqref{eq:byProp4.2}, we have
\[
| \gamma (t,v) | \lesssim \eps
\]
for $t \sim 1$, which implies that $|W(v)| \lesssim \eps$.
For $t \gg 1$, by \eqref{gamma_approx}, we have
\[
| \gamma (t,v) |
\lesssim |W(v)| + \eps \lesssim \eps .
\]

When $|v| \ll 1$, let $t_0>1$ be $|v|=t_0^{-\alpha}$.
The estimates \eqref{eq:u_decay} and \eqref{eq:byProp4.2} yield
\[
|\gamma (t_0,v)|
\lesssim \eps t_0^{D_{\ast} \eps} |v|^{\frac{s}{4}-\frac{1}{2}}
\]
as $0< \alpha < \frac{2}{2s+1}$.
Solving the ordinal differential equation in Proposition \ref{prop:gamma_decay} with initial data $t=t_0$, we have
\begin{align*}
|\gamma (t,v) |
& \lesssim \eps t_0^{D_{\ast} \eps} |v|^{\frac{s}{4}-\frac{1}{2}} + \int_{t_0}^{\infty} t'^{-\frac{6}{5}} \eps t'^{3D_{\ast} \eps} dt' \\
& \lesssim \eps \left( t_0^{D_{\ast} \eps} |v|^{\frac{s}{4}-\frac{1}{2}} + t_0^{-\frac{1}{5} + 3D_{\ast} \eps} \right)
\lesssim \eps |v|^{\frac{1}{2}} ,
\end{align*}
provided that $s>4$ and $\eps>0$ is sufficiently small.

When $|v| \gg 1$, let $t_0>1$ be $|v|=t_0^{\alpha}$.
The estimates \eqref{eq:u_decay} and \eqref{eq:byProp4.2} yield
\[
|\gamma (t_0,v)|
\lesssim \eps t_0^{D_{\ast} \eps} |v|^{-\frac{3}{4}}
\]
as $0< \alpha < \frac{2(s-2)}{3(s+1)}$.
Solving the ordinal differential equation in Proposition \ref{prop:gamma_decay} with initial data $t=t_0$, we have
\begin{align*}
|\gamma (t,v) |
& \lesssim \eps t_0^{D_{\ast} \eps} |v|^{-\frac{3}{4}} + \int_{t_0}^{\infty} t'^{-\frac{6}{5}} \eps t'^{3D_{\ast} \eps} dt' \\
& \lesssim \eps \left( t_0^{D_{\ast} \eps} |v|^{-\frac{3}{4}} + t_0^{-\frac{1}{5} + 3D_{\ast} \eps} \right)
\lesssim \eps
\end{align*}
provided that $\eps>0$ is sufficiently small.

Finally, the modified scattering follows from \eqref{gamma_approx} and \eqref{eq:byProp4.2}.

\appendix

\section{Remark on the paper by Hayashi and Naumkin \cite{HayNau15}} \label{appendix}

In this appendix, we take $\delta =1$.
We denote the free propagator by $U(t)$, i.e., $U(t) := e^{t \partial_x^{-1}}$.
Lemma 3.3 in \cite{HayNau15} says that for $0< \rho < \frac{1}{2}$,
\begin{equation} \label{eq:HN,lem3.3}
\| \lr{i\partial_x} \phi \|_{L^{\infty}} \lesssim t ^{-\frac{1}{2}} \| x \partial_x U(-t) \phi \|_{L^2}^{\frac{1}{2}+\rho} \| U(-t) \phi \|_{H^{\frac{2-2 \rho}{1-2 \rho}}}^{\frac{1}{2}-\rho} + t^{-\frac{1}{2}} \| U(-t) \phi \|_{H^{\frac{5}{2}}}
\end{equation}
holds, provided that the right hand side is finite.
However, this estimate fails.
More precisely, we give a counterexample to the inequality in which $\| \lr{i\partial_x} \phi \|_{L^{\infty}}$ on the left hand side is replaced by $\| \partial_x \phi \|_{L^{\infty}}$.

Let $\chi \in C_0^{\infty} (\R)$ be a smooth function with $\supp \chi \subset [ -1,1]$.
We set
\[
\phi (x) := \mathcal{F}^{-1} \left[ \chi \left( \frac{\cdot -2N}{N} \right) \right] (x)
= N e^{i2Nx} \mathcal{F}^{-1} [\chi] (Nx)
\]
for sufficiently large $N \ge 1$.
Then,
\[
\| U(-t) \phi \|_{H^s} = \| \phi \|_{H^s} \lesssim N^{s+\frac{1}{2}}
\]
for any $s \ge 0$.
Because
\[
\mathcal{F}[ x U(-t) \phi ] (\xi)
= e^{-\frac{t}{i\xi}} \left( \frac{t}{\xi^2} \chi \left( \frac{\xi -2N}{N} \right) +i \frac{1}{N} \chi' \left( \frac{\xi -2N}{N} \right) \right) ,
\]
we have
\[
\| x \partial_x U(-t) \phi \|_{L^2}
\le \| \partial_x (x U(-t) \phi) \|_{L^2} + \| U(-t) \phi \|_{L^2}
\lesssim t N^{-\frac{1}{2}} + N^{\frac{1}{2}} .
\]
Accordingly, the right hand side of \eqref{eq:HN,lem3.3} is bounded by
\[
t^{-\frac{1}{2}} ( t N^{-\frac{1}{2}} + N^{\frac{1}{2}}) ^{\frac{1}{2}+ \rho} N^{\left( \frac{2-2\rho}{1-2\rho} + \frac{1}{2} \right) \left( \frac{1}{2} - \rho \right)} + t^{-\frac{1}{2}} N^3
\lesssim
t^{\rho} N^{1-2\rho} + t^{-\frac{1}{2}} N^3 .
\]
Therefore, setting $t = N^{2+\frac{1}{2\rho}}$, we obtain
\[
\text{R.H.S. of \eqref{eq:HN,lem3.3}} \lesssim N^{\frac{3}{2}} + N^{2-\frac{1}{4\rho}} .
\]
Conversely, from $\| \partial_x \phi \|_{L^{\infty}} \sim N^2$, the left hand side of \eqref{eq:HN,lem3.3} is bounded below by $N^2$.

This counterexample is a reflection of the fact that the regularity $\frac{2-2\rho}{1-2\rho}$ is very small.
Hence, we need to replace $\frac{2-2\rho}{1-2\rho}$ on the right hand side of \eqref{eq:HN,lem3.3} by $\frac{4-2\rho}{1-2\rho}$, which is reduced to Lemma 2.3 with $l=1$ in \cite{HNN13}.

If we naively use the estimate
\begin{equation} \label{eq:HN,lem3.3'}
\| \lr{i\partial_x} \phi \|_{L^{\infty}} \lesssim t ^{-\frac{1}{2}} \| x \partial_x U(-t) \phi \|_{L^2}^{\frac{1}{2}+\rho} \| U(-t) \phi \|_{H^{\frac{4-2 \rho}{1-2 \rho}}}^{\frac{1}{2}-\rho} + t^{-\frac{1}{2}} \| U(-t) \phi \|_{H^{\frac{5}{2}}}
\end{equation}
instead of \eqref{eq:HN,lem3.3}, then we need to replace the regularity conditions of the norm
\[
\| u (t) \| _{Y_T} = \sup _{t \in [0,T]} \lr{t}^{-\gamma} \left( \| u(t) \|_{\dot{H}^{-1}} + \| u(t) \|_{H^m} + \| \partial_x J u(t) \|_{H^l} \right),
\]
namely $m>\frac{5}{2}+l$ and $l> \frac{3}{2}$, by $m>4+l$ and $l> \frac{3}{2}$.
See the proof of Lemma 3.4 in \cite{HayNau15}.
More precisely, \eqref{eq:HN,lem3.3} is used to estimate $\| u_{xx} \|_{L^{\infty}}$ and $\| | \partial_x| ^s u_{xx} \|_{L^{\infty}}$ in the proof of Lemma 3.4 in \cite{HayNau15}.
For example, if we use \eqref{eq:HN,lem3.3'} to estimate $\| | \partial_x| ^s u_{xx} \|_{L^{\infty}}$ for $0<s<\frac{1}{2}$, we get
\[
\| | \partial_x| ^s u_{xx} \|_{L^{\infty}}
\lesssim t ^{-\frac{1}{2}} \| | \partial_x| ^s \partial_x^2 J u \|_{L^2}^{\frac{1}{2}+\rho} \| u \|_{H^{s+1+\frac{4-2 \rho}{1-2 \rho}}}^{\frac{1}{2}-\rho} + t^{-\frac{1}{2}} \| u \|_{H^{s+\frac{7}{2}}} .
\]
Therefore, it seems that the assumption $u_0 \in \dot{H}^{-1} \cap H^m$, $x \partial_x u_0 \in H^l$ with $m>\frac{5}{2}+l$ and $l>\frac{3}{2}$ must be to replaced by $m>4+l$ and $l>\frac{3}{2}$, as $s+1+\frac{4-2 \rho}{1-2 \rho} > s+5$.


\begin{thebibliography}{99}

\bibitem{Boy05}
J. P. Boyd,
Ostrovsky and Hunter's generic wave equation for weakly dispersive waves: matched asymptotic and pseudospectral study of the paraboloidal travelling waves (corner and near-corner waves). European J. Appl. Math. 16 (2005), no. 1, 65--81.

\bibitem{HarG16}
B. Harrop-Griffiths,
Long time behavior of solutions to the mKdV,
Comm. Partial Differential Equations 41 (2016), no. 2, 282--317.

\bibitem{HIT}
B. Harrop-Griffiths, M. Ifrim, and D. Tataru,
The lifespan of small data solutions to the KP-I,
 Int Math Res Notices (2016) doi: 10.1093/imrn/rnw017.

\bibitem{HayNau14}
N. Hayashi and P. I. Naumkin,
On the generalized reduced Ostrovsky equation,
SUT J. Math. 50 (2014), no. 2, 67--101. 

\bibitem{HayNau15}
N. Hayashi and P. I. Naumkin,
Large time asymptotics for the reduced Ostrovsky equation,
Comm. Math. Phys. 335 (2015), no. 2, 713--738.

\bibitem{HNN13}
N. Hayashi, P. I. Naumkin, and T. Niizato,
Asymptotics of solutions to the generalized Ostrovsky equation,
J. Differential Equations 255 (2013), no. 8, 2505--2520

\bibitem{HNN14}
N. Hayashi, P. I. Naumkin, and T. Niizato,
Nonexistence of the usual scattering states for the generalized Ostrovsky-Hunter equation,
J. Math. Phys. 55 (2014), no. 5, 11pp.

\bibitem{Hun90}
J. K. Hunter,
Numerical solutions of some nonlinear dispersive wave equations. Computational solution of nonlinear systems of equations,
Lectures in Appl. Math., 26, 1990, 301--316.

\bibitem{IfrTat15}
M. Ifrim and D. Tataru,
Global bounds for the cubic nonlinear Schr\"odinger equation (NLS) in one space dimension,
Nonlinearity 28 (2015), no. 8, 2661--2675.

\bibitem{IfrTat16}
M. Ifrim and D. Tataru,
Two dimensional water waves in holomorphic coordinates II: global solutions,
Bull. Soc. Math. France 144 (2016), no. 2, 369--394.

\bibitem{Kla85}
S. Klainerman,
Global existence of small amplitude solutions to nonlinear Klein-Gordon equations in four space-time dimensions,
Comm. Pure Appl. Math. 38 (1985), no. 5, 631--641.

\bibitem{LPS09}
Y. Liu, D. Pelinovsky, and A. Sakovich,
Wave breaking in the short-pulse equation,
Dyn. Partial Differ. Equ. 6 (2009), no. 4, 291--310.

\bibitem{LPS10}
Y. Liu, D. Pelinovsky, and A. Sakovich,
Wave breaking in the Ostrovsky-Hunter equation,
SIAM J. Math. Anal. 42 (2010), no. 5, 1967--1985.

\bibitem{Nii14}
T. Niizato,
Asymptotic behavior of solutions to the short pulse equation with critical nonlinearity,
Nonlinear Anal. 111 (2014), 15--32.

\bibitem{PS10}
D. Pelinovsky and A. Sakovich,
Global well-posedness of the short-pulse and sine-Gordon equations in energy space,
Comm. Partial Differential Equations 35 (2010), no. 4, 613--629.

\bibitem{SchWay04}
T. Sch\"{a}fer and C. E. Wayne,
Propagation of ultra-short optical pulses in cubic nonlinear media. Phys. D 196 (2004), no. 1-2, 90--105.

\bibitem{SSK10}
A. Stefanov, Y. Shen, and P. G. Kevrekidis,
Well-posedness and small data scattering for the generalized Ostrovsky equation,
J. Differential Equations 249 (2010), no. 10, 2600--2617. 
\end{thebibliography}
\end{document}